\documentclass[11pt]{article}

\usepackage[margin=1in]{geometry}
\usepackage{bm}
\usepackage{amsmath,amssymb,amsfonts,amsthm}
\usepackage{mathtools}
\usepackage{booktabs}
\usepackage{multirow}
\usepackage{graphicx}
\usepackage{caption}
\usepackage{subfig}
\usepackage{float}
\usepackage[hidelinks]{hyperref}

\newcommand{\jmp}[1]{[\![#1]\!]}
\newcommand{\avg}[1]{\{\!\{#1\}\!\}}
\newcommand{\norm}[1]{\|#1\|}
\newcommand{\bnorm}[1]{\Big\|#1\Big\|}
\newcommand{\appendixnumberline}[1]{#1\quad}
\newcommand{\Acknowledgements}[1]{\section*{Acknowledgements}#1}

\theoremstyle{plain}
\newtheorem{theorem}{Theorem}[section]

\newtheorem{lemma}[theorem]{Lemma}
\newtheorem{proposition}[theorem]{Proposition}

\theoremstyle{definition}
\newtheorem{definition}[theorem]{Definition}

\theoremstyle{remark}
\newtheorem{remark}[theorem]{Remark}

\title{An Optimal IPDG Scheme for the Biharmonic Equation}

\author{%
Bohua Zhang\thanks{School of Mathematics and Statistics, Beijing Institute of
Technology, Beijing 100081, China (\texttt{bh\_zhang@bit.edu.cn}).}
\and
Xia Ji\thanks{School of Mathematics and Statistics, Beijing Institute of
Technology, Beijing 100081, China, and Beijing Key Laboratory on MCAACI,
Beijing Institute of Technology, Beijing 100081, China
(\texttt{jixia@bit.edu.cn}). Corresponding author.}
\and
Shuo Zhang\thanks{State Key Laboratory of Mathematical Sciences (SKLMS) and
State Key Laboratory of Scientific and Engineering Computing (LSEC), Institute
of Computational Mathematics and Scientific/Engineering Computing, Academy of
Mathematics and Systems Science, Chinese Academy of Sciences, Beijing 100190,
China, and School of Mathematical Sciences, University of Chinese Academy of
Sciences, Beijing 100049, China (\texttt{szhang@lsec.cc.ac.cn}).}%
}

\date{}

\begin{document}

\maketitle

\begin{abstract}
This paper presents an optimal interior penalty discontinuous Galerkin (IPDG)
scheme for the planar biharmonic equation using piecewise polynomials of degree
$k=3$ or $4$. In standard IPDG methods, large penalty parameters force the
discrete solution into an overconstrained space, severely degrading
accuracy---phenomenon known as numerical locking. To overcome this, our method
enforces only vertex continuity and projects the jumps of the function and its
normal derivative onto $\mathcal{P}^{k-3}$ and $\mathcal{P}^{k-2}$,
respectively. Consequently, as the penalty parameters tend to infinity, the
discrete solution is forced to lie in a constrained subspace $V_{h,\infty}^k$,
which we identify as the optimal nonconforming finite element space $B_h^k$.
This intrinsic connection fundamentally eliminates numerical locking. We prove
optimal error estimates of $\mathcal{O}(h^{k-1})$ in a mesh-dependent energy
norm and $\mathcal{O}(h^{k+1})$ in the $L^2$ norm. Numerical experiments on
both convex and L-shaped domains confirm that the proposed scheme is robust and
entirely locking-free, even for extremely large penalty parameters.
\end{abstract}

\noindent\textbf{Keywords.} optimal discontinuous Galerkin method, interior
penalty, biharmonic equation, numerical locking

\medskip
\noindent\textbf{MSC (2020).} 65N30, 65N15

\medskip

\tableofcontents

\section{Introduction}
	\label{sec:introduction}

	Fourth-order elliptic partial differential equations, in particular the biharmonic problem $\Delta^2 u = f$, arise naturally in solid mechanics and fluid dynamics.
	Classical applications range from Kirchhoff-Love plate bending theory and the stream-function formulation of the incompressible Navier-Stokes equations to the Cahn-Hilliard phase-field model~\cite{Dong2021}.
	Standard Galerkin finite element approximations for such problems require $H^2$-conforming discrete spaces.
	However, constructing globally $C^1$-continuous basis functions on general triangulations is highly complex, typically demanding polynomials of at least degree $k \ge 5$ alongside intricate degrees of freedom.
	Since the theoretical energy-norm convergence rate for $H^m$ problems discretized with degree-$k$ piecewise polynomials is bounded by $\mathcal{O}(h^{k+1-m})$~\cite{lin2013}, achieving this upper bound with lower-degree polynomials (e.g., $k \in \{3,4\}$) is of significant practical interest to simplify element structures and reduce computational overhead.
	
	To bypass the strict topological constraints of conforming low-degree elements, discontinuous Galerkin (DG) methods have been widely adopted.
	Nitsche-type interior penalty (IP) methods~\cite{arnold1982,baker1977,Douglas1976,wheeler1978} provide a flexible alternative by relaxing global continuity requirements for elliptic problems~\cite{arnold2002}.
	For fourth-order equations specifically, IPDG schemes successfully achieve optimal $h$-convergence for arbitrary polynomial degrees~\cite{baker1977,brenner2005,Engel2002,Georgoulis2009,Hansbo2002,Mozolevski2007}.
	Despite these advantages, standard IPDG formulations require a sufficiently large penalty parameter to ensure discrete coercivity.
	An overly large penalty, however, frequently causes severe accuracy degradation---well-documented phenomenon known as numerical locking~\cite{Hansbo2002,Hansbo2011,JI2014}, where the error \emph{increases} dramatically when the penalty parameter exceeds a certain threshold.
	This occurs because, when the penalty parameter becomes too large, the discrete solution is forced into an overconstrained continuous space, destroying the scheme's approximation capability.
	While locking can be avoided if the limit function space is optimal (as seen with Morley and Argyris elements~\cite{Hansbo2002}), the necessary condition---an optimal $C^1$ subspace within the piecewise polynomial space~\cite{Hansbo2015} --- is rarely satisfied for cubic or quartic polynomials on unstructured grids.
	
	Thus, designing optimal IPDG methods for low-degree polynomials that remain robust under large penalization is a critical open problem. 
	We focus on the practically relevant low-order cases $k=3,4$, for which $C^1$-conforming spaces are not optimal on general grids; 
	for \(k\ge 5\), the classical Argyris element already yields an optimal \(C^1\)-conforming space and locking is not a concern. Thus, our work closes the gap for the most practically relevant low-order cases.
	
	In this paper, we introduce a unified reduced-penalty IPDG framework that circumvents numerical locking for the planar biharmonic problem on general triangulations for $k\in\{3,4\}$.
	Our approach is built on a partially continuous finite element space (vertex continuity only) and an adaptive projection strategy:
	the jumps of the function and its normal derivative are penalized only after being projected onto $\mathcal{P}^{k-3}$ and $\mathcal{P}^{k-2}$, respectively.
	The central theoretical insight is that, as the penalty parameters tend to infinity, the DG solution converges to the solution of a nonconforming finite element method whose underlying space is exactly the optimal nonconforming space $B_h^k$ introduced by Zhang S.~\cite{Zhang2021,Zhang2018_preprint}.
	Because $B_h^k$ possesses the optimal $\mathcal{O}(h^{k-1})$ approximation property in the energy norm, our IPDG scheme remains optimally accurate \emph{uniformly} in the penalty magnitude---thus completely eliminating numerical locking for cubic and quartic elements.
	
	We prove optimal \textit{a priori} error estimates: $\mathcal{O}(h^{k-1})$ in a mesh-dependent energy norm and $\mathcal{O}(h^{k+1})$ in the $L^2$ norm for solutions in $H^{k+1}(\Omega)$.  Numerical experiments on both convex and non-convex domains validate the theoretical predictions and demonstrate locking-free behavior even for extremely large penalty parameters (up to $10^5$).
	
	The remainder of this paper is organized as follows. 
	Section~\ref{sec:preliminaries} introduces the model problem, notations, and the partially continuous finite element space.
	Section~\ref{sec:formulation} presents the discrete IPDG formulation with reduced-order penalty. Section~\ref{sec:analysis} establishes stability and optimal \textit{a priori} error estimates.
	Section~\ref{sec:numerical} provides numerical experiments, and Section~\ref{sec:conclusion} gives concluding remarks.
\section{IPDG schemes for biharmonic equation}
	\label{sec:preliminaries}
	
	We consider the classical fourth-order biharmonic equation with clamped boundary conditions as our model problem:
	\begin{equation}
		\begin{cases}
			\Delta^2 u = f & \text{in } \Omega, \\
			u = \partial_n u = 0 & \text{on } \partial\Omega,
		\end{cases}
	\end{equation}
	where $\Omega \subset \mathbb{R}^2$ is a polygonal domain, and the source term satisfies $f \in L^2(\Omega)$. The variational formulation is to find $u\in H^2_0(\Omega)$, such that 
	\begin{equation}
	\int_\Omega\nabla^2u:\nabla^2v=\int_\Omega fv,\quad\forall\,v\in H^2_0(\Omega). 
	\end{equation}

	\subsection{The Unified Discrete IPDG Formulation}
	\label{sec:formulation}

	Let $\mathcal{T}_h$ be a shape-regular triangulation of the domain $\Omega$.
	We denote the set of interior edges by $\mathcal{E}_h^i$ and boundary edges by $\mathcal{E}_h^b$, with $\mathcal{E}_h = \mathcal{E}_h^i \cup \mathcal{E}_h^b$.
	Let $e \in \mathcal{E}_h^i$ be an interior edge shared by two adjacent elements $T^+$ and $T^-$.
	We assign a fixed unit normal vector $\bm{n}_e$ to each edge $e$, pointing from $T^+$ to $T^-$.
	We adopt standard DG notation, defining the average $\avg{\cdot}$ and the scalar jump $\jmp{\cdot}$ operators across an interior edge $e$ as conventional:
	\begin{equation}
		\avg{v} = \frac{1}{2}(v^+ + v^-), \quad \jmp{v} = v^+ - v^-.
	\end{equation}
	The normal derivative evaluated along $\bm{n}_e$ is defined as $\partial_n v = \nabla v \cdot \bm{n}_e$.
	Consequently, the jump and average of the normal derivative are given by $\jmp{\partial_n v} = \partial_n v^+ - \partial_n v^-$ and $\avg{\partial_n v} = \frac{1}{2}(\partial_n v^+ + \partial_n v^-)$.
	On boundary edges $e \in \mathcal{E}_h^b$, the normal $\bm{n}_e$ coincides with the outward unit normal of $\Omega$.
	To respect the homogeneous clamped boundary conditions, analogous trace definitions are applied: $\avg{v} = v$ and $\jmp{v} = v$ (where the latter denotes the trace value), and similarly $\avg{\partial_n v} = \partial_n v$ and $\jmp{\partial_n v} = \partial_n v$.
	
	Throughout this paper, we utilize the standard trace inequality and inverse estimate for discrete polynomials $v \in \mathcal{P}^k(T)$.
	Specifically, for any element $T \in \mathcal{T}_h$, there exist positive constants $C_{tr}$ and $C_{inv}$ such that:
	\begin{align}
		\norm{v}_{0,\partial T}^2 &\leq C_{tr} \left( h_T^{-1} \norm{v}_{0,T}^2 + h_T \norm{\nabla v}_{0,T}^2 \right), \label{eq:trace_ineq} \\
		\norm{\nabla^m v}_{0,T} &\leq C_{inv} h_T^{-m} \norm{v}_{0,T}. \label{eq:inverse_ineq}
	\end{align}
	
	To avoid the numerical locking inherent in fully penalized high-order DG methods, we restrict our trial and test functions to a partially continuous finite element space.
	Let $\mathcal{V}_h$ denote the set of all vertices in the triangulation $\mathcal{T}_h$.
	For a given polynomial degree $k \in \{3, 4\}$, we define the vertex-continuous discrete space $V_h^k$ as follows:
	\begin{equation}
		V_h^k = \left\{ v \in L^2(\Omega) : v|_T \in \mathcal{P}^k(T) , \forall T \in \mathcal{T}_h, \text{ and } \jmp{v}(\boldsymbol{z}) = 0 , \forall \boldsymbol{z} \in \mathcal{V}_h \right\}. \label{eq:space_definition}
	\end{equation}
	For boundary vertices $\boldsymbol{z} \in \partial\Omega$, the constraint $\jmp{v}(\boldsymbol{z}) = 0$ intrinsically enforces the strong boundary condition $v(\boldsymbol{z}) = 0$.
	
	The unified discrete bilinear form $A_h(\cdot, \cdot)$ defined on $V_h^k \times V_h^k$ incorporates a reduced-order projected penalty to prevent artificial stiffness and enhance matrix sparsity.
	The formulation is given by:
	\begin{equation}
		A_h(w, v) = a_h(w, v) + b_h(w, v) + c_h(w, v) + d_h(w, v). \label{eq:bilinear_form}
	\end{equation}
	Here, the effective shear force is defined with respect to the fixed edge normal $\bm{n}_e$ and its corresponding tangent $\bm{t}_e$, denoted by $T_e(w) = \partial_n \Delta w + \partial_{ntt} w$, where $\partial_{ntt} w = \partial_t(\partial_{nt} w)$. On straight-edged meshes the edge curvature term vanishes, so this expression is exact.
	The individual terms composing the bilinear form are given by:
	\begin{equation}
		\begin{aligned}
			a_h(w, v) &:= \sum_{T \in \mathcal{T}_h} \int_T \nabla^2 w : \nabla^2 v \, dx, \\
			b_h(w, v) &:= \sum_{e \in \mathcal{E}_h} \int_e \left( \avg{T_e(w)}\jmp{v} + \avg{T_e(v)}\jmp{w} \right) ds, \\
			c_h(w, v) &:= -\sum_{e \in \mathcal{E}_h} \int_e \left( \avg{\partial_{nn}w}\jmp{\partial_n v} + \avg{\partial_{nn}v}\jmp{\partial_n w} \right) ds, \\
			d_h(w, v) &:= \sum_{e \in \mathcal{E}_h} \int_e \left( \frac{\gamma_1}{h_e^3} \mathcal{P}_e^{k-3}\jmp{w} \mathcal{P}_e^{k-3}\jmp{v} + \frac{\gamma_2}{h_e} \mathcal{P}_e^{k-2}\jmp{\partial_n w} \mathcal{P}_e^{k-2}\jmp{\partial_n v} \right) ds,
		\end{aligned} \label{eq:discrete_operators}
	\end{equation}
	where $\mathcal{P}_e^m$ is the standard $L^2$-orthogonal projection operator onto the polynomial space $\mathcal{P}^m(e)$.
	Parameters $\gamma_1$ and $\gamma_2$ represent the penalty stabilization constants, dictating the imposition strength of the continuity constraints.
	
	To facilitate the subsequent analysis, we define the projected mesh-dependent energy norm on the discrete space:
	\begin{equation}
		\norm{v}_h^2 := \sum_{T \in \mathcal{T}_h} \norm{\nabla^2 v}_{0,T}^2 + \sum_{e \in \mathcal{E}_h} \frac{1}{h_e} \bnorm{\mathcal{P}_e^{k-2}\jmp{\partial_n v}}_{0,e}^2 + \sum_{e \in \mathcal{E}_h} \frac{1}{h_e^3} \bnorm{\mathcal{P}_e^{k-3}\jmp{v}}_{0,e}^2.
	\end{equation}
	Furthermore, to evaluate continuous functions properly (which may lack polynomial trace bounds), we introduce the augmented space $V(h) = V_h^k + (H^4(\Omega) \cap H_0^2(\Omega))$.
	This augmented space is equipped with the extended norm:
	\begin{equation}
		\norm{v}_{h,*}^2 := \norm{v}_h^2 + \sum_{T \in \mathcal{T}_h} \left( h_T \norm{\partial_{nn} v}_{\partial T}^2 + h_T^3 \norm{T_e(v)}_{\partial T}^2 \right).
	\end{equation}
	
	\begin{remark}[Implementation of the Projection Operators]
		In practical matrix assembly, the $L^2$-orthogonal projections $\mathcal{P}_e^{k-3}$ and $\mathcal{P}_e^{k-2}$ are implemented by solving local mass matrix systems on each edge $e$.
		To avoid numerical integration errors that could pollute the optimal convergence rates, we utilize 1D Gauss-Legendre quadrature rules with exactly $k+1$ points for all edge integrals.
		The projection bases are chosen as the normalized 1D Legendre polynomials mapped onto each edge, which orthogonalizes the local mass matrix and significantly accelerates the element-level assembly process.
		Furthermore, because we use normalized 1D Legendre polynomials as the projection basis, the local mass matrix on each edge reduces to a diagonal matrix (entries are squares of norms of Legendre polynomials). Consequently, computing the projected jumps requires only a few dot products, and the overall cost of the penalty terms is comparable to that of a standard full-jump IPDG implementation.
	\end{remark}

	\subsection{Discrete Stability Analysis}
	\label{subsec:stability}

	\begin{lemma}[Consistency and Galerkin Orthogonality] \label{lem:orthogonality}
		Assume the exact solution $u \in H^s(\Omega)$ with $s \geq 4$.
		Then, the inter-element jumps vanish, i.e., $\jmp{u} = 0$ and $\jmp{\partial_n u} = 0$, yielding the Galerkin orthogonality: $A_h(u - u_h, v) = 0$ for all $v \in V_h^k$.
	\end{lemma}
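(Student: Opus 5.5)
The plan is to prove consistency in the form $A_h(u,v)=\int_\Omega fv$ for all $v\in V_h^k$. Then, if the discrete scheme is defined as $A_h(u_h,v)=\int_\Omega fv$, subtracting the two identities gives $A_h(u-u_h,v)=0$ by linearity.

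\textbf{Vanishing jumps.} Since $s\ge 4>2$, Sobolev embedding in two dimensions gives $u\in C^1(\overline\Omega)$, or at least $u\in H^2$ with well-defined traces of $u$ and $\nabla u$ on each edge. Hence $\jmp{u}=0$ and $\jmp{\partial_n u}=0$ on every interior edge. On boundary edges the clamped conditions $u=\partial_n u=0$ give the same conclusion, because there the jumps are just traces. As a consequence, $d_h(u,v)=0$, and the parts of $b_h$ and $c_h$ that contain $\jmp{u}$ or $\jmp{\partial_n u}$ vanish. Also $u\in H^4$ gives $\nabla^2 u\in H^2$ and $\nabla\Delta u\in H^1$, so the traces $\partial_{nn}u$ and $T_e(u)$ are single-valued on interior edges. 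This yields $\avg{\partial_{nn}u}=\partial_{nn}u$ and $\avg{T_e(u)}=T_e(u)$.

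\textbf{Elementwise integration by parts.} On each $T$, I integrate $\int_T\nabla^2u:\nabla^2v$ by parts twice. I decompose $\nabla v$ on $\partial T$ into normal and tangential parts, and integrate the tangential term $\partial_{nt}u\,\partial_t v$ by parts once more along each edge. This produces
\[
\int_T\nabla^2u:\nabla^2v=\int_T\Delta^2u\,v+\int_{\partial T}\partial_{nn}u\,\partial_n v-\int_{\partial T}T(u)\,v+\sum_{\text{corners}}\big[\partial_{nt}u\big]v,
\]
with the normal and tangent taken relative to $T$. I then sum over $T$ and rewrite the edge contributions with the fixed normal $\bm n_e$. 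Single-valuedness of $\partial_{nn}u$ and $T_e(u)$, together with the identity $ab^+-ab^-=a\jmp{b}$, turns them into $\sum_e\int_e\partial_{nn}u\jmp{\partial_n v}-\sum_e\int_e T_e(u)\jmp{v}$. The signs flip consistently, since both $\partial_n$ and $T_e$ are odd in $\bm n$ and $\partial_{nn}$ is even. These terms cancel exactly against $c_h(u,v)$ and $b_h(u,v)$, which leaves $A_h(u,v)=\int_\Omega\Delta^2u\,v=\int_\Omega fv$.

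\textbf{Main obstacle: the vertex terms.} The step I expect to be delicate is the corner contributions $\sum_T\sum_{z}[\partial_{nt}u]_z v|_T(z)$ coming from the tangential integration. Here I use the defining property of $V_h^k$: $v$ is single-valued at every vertex $z$, and $v(z)=0$ at boundary vertices. At an interior vertex, $v(z)$ therefore factors out, and what remains is the sum over the patch of the corner jumps of $\partial_{nt}u$. For $u\in H^4\subset C^2$, these jumps telescope to zero around the vertex, because every edge is counted twice with opposite tangents and the same continuous value of $\nabla^2u(z)$. At boundary vertices the terms vanish because $v(z)=0$. This is exactly the point where vertex continuity, rather than full continuity, suffices. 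I would write out the telescoping carefully for a generic vertex patch.
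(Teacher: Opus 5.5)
Your proposal is correct and follows the same route as the paper. The jumps of the smooth solution vanish, elementwise integration by parts gives consistency $A_h(u,v)=(f,v)$, and Galerkin orthogonality follows by subtracting the discrete equation. Your explicit treatment of the corner terms $\partial_{nt}u\,v$ from the tangential integration by parts fills in exactly the step that the paper's one-line proof leaves implicit. These terms cancel pairwise at interior vertices, because $v\in V_h^k$ is single-valued there and $\nabla^2u$ is continuous. They vanish at boundary vertices because $v(\boldsymbol z)=0$. This is the one place where the vertex continuity built into $V_h^k$ is actually needed.
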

	\begin{proof}
		For the exact solution $u \in H^4(\Omega)$, its trace and normal derivative are single-valued across any interior edge $e \in \mathcal{E}_h^i$.
		Thus, the jumps $\jmp{u}$ and $\jmp{\partial_n u}$ trivially vanish. Multiplying the biharmonic equation by a test function $v \in V_h^k$ and integrating by parts element-wise immediately yields the standard Galerkin orthogonality.
	\end{proof}

	\begin{lemma} \label{lem:norm_equiv}
		By standard discrete Poincar\'{e}-Friedrichs type inequalities for piecewise polynomials, the full jump along the edges is bounded globally by the broken Hessian.
		Consequently, the projected norm $\norm{\cdot}_h$ is uniformly equivalent to the full jump DG norm on the discrete space $V_h^k$.
	\end{lemma}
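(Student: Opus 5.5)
The inequality $\norm{v}_h \le \norm{v}_{DG}$ is immediate, where
\[
\norm{v}_{DG}^2 := \sum_{T} \norm{\nabla^2 v}_{0,T}^2 + \sum_{e} h_e^{-1}\norm{\jmp{\partial_n v}}_{0,e}^2 + \sum_e h_e^{-3}\norm{\jmp{v}}_{0,e}^2 .
\]
It holds because $\mathcal{P}_e^{m}$ is an $L^2(e)$-orthogonal projection and therefore a contraction. The real task is the reverse bound $\norm{v}_{DG}\le C\norm{v}_h$ on $V_h^k$. I will prove the stronger statement that both full jump terms are controlled by the broken Hessian alone,
\[
\sum_e h_e^{-3}\norm{\jmp{v}}_{0,e}^2 + \sum_e h_e^{-1}\norm{(I-\mathcal{P}_e^{k-2})\jmp{\partial_n v}}_{0,e}^2 \le C \sum_T \norm{\nabla^2 v}_{0,T}^2 .
\]
Combined with the orthogonal splitting $\norm{\jmp{\partial_n v}}_{0,e}^2=\norm{\mathcal{P}_e^{k-2}\jmp{\partial_n v}}_{0,e}^2+\norm{(I-\mathcal{P}_e^{k-2})\jmp{\partial_n v}}_{0,e}^2$, this gives the claim.

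The key construction uses vertex continuity. Let $I_h v$ be the elementwise linear nodal interpolant of $v$ at the three vertices of each $T$. Because $v\in V_h^k$ is single-valued at every vertex, $I_h v$ is a globally continuous piecewise linear function. Because boundary vertex values vanish, we also have $I_h v\in H^1_0(\Omega)$. Hence $\jmp{I_h v}=0$ on every edge, including boundary edges. In addition, $\partial_n I_h v$ is constant on each side of $e$, so $\jmp{\partial_n I_h v}\in\mathcal{P}^0(e)\subset \mathcal{P}^{k-2}(e)$ and is annihilated by $I-\mathcal{P}_e^{k-2}$. Setting $w=v-I_h v$ elementwise, this gives
\[
\jmp{v}=\jmp{w},\qquad (I-\mathcal{P}_e^{k-2})\jmp{\partial_n v}=(I-\mathcal{P}_e^{k-2})\jmp{\partial_n w}.
\]

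The remaining step is a local estimate on each $T\supset e$. By the standard interpolation estimate on the reference element, $\norm{w}_{0,T}+h_T\norm{\nabla w}_{0,T}\le C h_T^2|v|_{2,T}$. This uses Bramble--Hilbert, since $v-I_hv$ vanishes for linear $v$ and nodal evaluation is bounded on $\mathcal{P}^k$. Then the trace inequality \eqref{eq:trace_ineq} gives
\[
\norm{w}_{0,e}^2\le C h_T^{3}|v|_{2,T}^2,\qquad \norm{\nabla w}_{0,e}^2\le C h_T|v|_{2,T}^2,
\]
where the second bound uses \eqref{eq:trace_ineq} applied to $\nabla w$ together with $\nabla^2 w=\nabla^2 v$. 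Multiplying by $h_e^{-3}$ and $h_e^{-1}$ respectively, using that $I-\mathcal{P}_e^{k-2}$ is a contraction, and summing over edges with shape regularity ($h_e\simeq h_T$, each element having three edges) yields the displayed bound. Note that the $\jmp{v}$ term needs no projection at all, and the argument works for both $k=3$ and $k=4$.

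The one point needing care is the reliance on vertex continuity: it is what makes $I_h v$ conforming and kills the linear part of $\jmp{v}$. Without it, $(I-\mathcal{P}_e^{0})$ could not control the linear part of the jump when $k=3$. I therefore expect this construction to be the crux, and the rest is routine scaling.
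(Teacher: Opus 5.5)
Your proof is correct, but it takes a different route from the paper.

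The paper argues edge by edge in one dimension. Since $\jmp{v}|_e\in\mathcal{P}^k(e)$ vanishes at both endpoints, $\norm{\partial_s^2\,\cdot\,}_{0,e}$ is a norm on that space, which gives $h_e^{-3}\norm{(I-\mathcal{P}_e^{k-3})\jmp{v}}_{0,e}^2\le Ch_e\norm{\partial_s^2\jmp{v}}_{0,e}^2$. Since constants lie in $\mathcal{P}^{k-2}(e)$, a zero-mean Poincar\'e inequality on $e$ gives $h_e^{-1}\norm{(I-\mathcal{P}_e^{k-2})\jmp{\partial_n v}}_{0,e}^2\le Ch_e\norm{\partial_s\jmp{\partial_n v}}_{0,e}^2$. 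The tangential derivatives are then converted to the broken Hessian by the discrete inverse trace inequality $\norm{\nabla^2 v}_{0,e}^2\le Ch_T^{-1}\norm{\nabla^2 v}_{0,T}^2$.

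You instead subtract the conforming $P_1$ interpolant, which removes at once the linear part of $\jmp{v}$ and the constant part of $\jmp{\partial_n v}$. After that you need only Bramble--Hilbert and the continuous trace inequality.

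What each approach buys:
\begin{itemize}
\item The paper's version shows mode by mode on each edge which components are penalized and which are controlled by tangential variation. However, it relies on polynomial inverse estimates.
\item Your version needs no inverse estimate, so it holds for any vertex-continuous piecewise $H^2$ function. This is exactly the setting of the bound on $\jmp{\eta_\phi}$ in the proof of Theorem~\ref{thm:l2_error}, where the paper itself uses your Lagrange-interpolant device.
\item Your version also bounds the full function jump $\jmp{v}$ directly. It needs no dimension-counting argument of the kind the paper gives for $(I-\mathcal{P}_e^{1})\jmp{v}$ when $k=4$.
\item Your splitting makes clear that only the function-value jump can be bounded in full. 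The constant part of $\jmp{\partial_n v}$ is not controlled by the broken Hessian: a continuous piecewise linear hat function at an interior vertex lies in $V_h^k$ and is a counterexample. That part must therefore stay in $\norm{\cdot}_h$, as your splitting keeps it.
\end{itemize}
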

	\begin{proof}
		For any $v \in V_h^k$, the function value jump $\jmp{v}$ is a piecewise polynomial of degree $k$ along any edge $e \in \mathcal{E}_h$.
		Due to the topological constraint of the space defined in \eqref{eq:space_definition}, $\jmp{v}$ vanishes at all vertices $\boldsymbol{z} \in \mathcal{V}_h$.
		On any given edge $e$, the semi-norm $\norm{\partial_s^2 p}_{0,e}$ constitutes a true norm on the finite-dimensional subspace $\{ p \in \mathcal{P}^k(e) : p(\text{endpoints}) = 0 \}$.
		This holds because a polynomial with a vanishing second derivative is linear, and a linear function vanishing at both endpoints is identically zero.
		By the equivalence of norms on finite-dimensional spaces and a standard inverse estimate scaled to the edge $e$, the unpenalized high-frequency components are controlled by the local tangential derivatives:
		\begin{equation}
			h_e^{-3} \bnorm{(I - \mathcal{P}_e^{k-3})\jmp{v}}_{0,e}^2 \leq C h_e \norm{\partial_s^2 (\jmp{v})}_{0,e}^2,
		\end{equation}
		where $\partial_s$ denotes the tangential derivative along $e$.
		
		Summing this over all edges and applying the standard discrete inverse trace inequality (i.e., $\norm{\nabla^2 v}_{0,e}^2 \le C h_T^{-1} \norm{\nabla^2 v}_{0,T}^2$), the high-frequency components are globally bounded by the broken Hessian:
		\begin{equation}
			\sum_{e \in \mathcal{E}_h} h_e^{-3} \bnorm{(I - \mathcal{P}_e^{k-3})\jmp{v}}_{0,e}^2 \leq C \sum_{T \in \mathcal{T}_h} \norm{\nabla^2 v}_{0,T}^2. \label{eq:poincare_jump}
		\end{equation}
		Similarly, for the normal derivative jump $\jmp{\partial_n v} \in \mathcal{P}^{k-1}(e)$, its orthogonal complement to $\mathcal{P}^{k-2}(e)$ is bounded by its tangential variation:
		\begin{equation}
			\begin{aligned}
				\sum_{e \in \mathcal{E}_h} h_e^{-1} \bnorm{(I - \mathcal{P}_e^{k-2})\jmp{\partial_n v}}_{0,e}^2 &\leq C \sum_{e \in \mathcal{E}_h} h_e \norm{\partial_s (\jmp{\partial_n v})}_{0,e}^2 \\
				&\leq C \sum_{T \in \mathcal{T}_h} \norm{\nabla^2 v}_{0,T}^2. \label{eq:poincare_normal}
			\end{aligned}
		\end{equation}
		Since the total jump is the orthogonal sum of the projected lower-order components and the unpenalized higher-order components, bounds \eqref{eq:poincare_jump} and \eqref{eq:poincare_normal} demonstrate that the projected norm $\norm{v}_h$ provides equivalent control over the discrete functions as the standard full-jump DG norm.
		The equivalence constants depend only on the polynomial degree $k$ and the shape regularity of $\mathcal{T}_h$, strictly independent of $h$.
		
		The case $k=4$ follows completely analogous lines. For the function value jump $\jmp{v} \in \mathcal{P}^4(e)$ that vanishes at the endpoints, the semi-norm $\|\partial_s^2 p\|_{0,e}$ remains a norm on the finite-dimensional space $\{p \in \mathcal{P}^4(e) : p(\text{endpoints}) = 0\}$. By dimension counting, the boundary conditions and the orthogonal projection operator $(I - \mathcal{P}_e^1)$ collectively eliminate all degrees of freedom for the linear polynomial kernel of the semi-norm $\|\partial_s^2 \cdot\|_{0,e}$, thereby ensuring it is a proper norm on the orthogonal complement space.
		Hence, by equivalence of norms on this space together with a standard scaling argument, we obtain
		\begin{equation}
			h_e^{-3} \norm{(I - \mathcal{P}_e^1)\jmp{v}}_{0,e}^2 \leq C h_e \norm{\partial_s^2(\jmp{v})}_{0,e}^2.
		\end{equation}
		For the normal derivative jump $\jmp{\partial_n v} \in \mathcal{P}^3(e)$, we need to control the orthogonal complement of $\mathcal{P}^2(e)$.
		Any polynomial $q \in \mathcal{P}^3(e)$ with $\partial_s q = 0$ is constant. Since constant functions belong to $\mathcal{P}^2(e)$, orthogonality to $\mathcal{P}^2(e)$ forces the constant to vanish.
		Thus $\|\partial_s q\|_{0,e}$ is a norm on the complement space $\mathcal{P}^3(e) \ominus \mathcal{P}^2(e)$.
		By finite-dimensional norm equivalence and the usual scaling, we get
		\begin{equation}
			h_e^{-1} \norm{(I - \mathcal{P}_e^2)\jmp{\partial_n v}}_{0,e}^2 \leq C h_e \norm{\partial_s(\jmp{\partial_n v})}_{0,e}^2.
		\end{equation}
		Summing over all edges and applying the discrete trace inequality exactly as for $k=3$ gives the global bounds also for $k=4$.
		Therefore, Lemma~\ref{lem:norm_equiv} holds verbatim for $k \in \{3,4\}$.
	\end{proof}

	\begin{proposition}[Projection identities for the cross terms] \label{prop:orthogonality}
		For any discrete function $v \in V_h^k$, because $v|_T \in \mathcal{P}^k(T)$, the traces of $v$ inherently satisfy $\partial_{nn} v|_e \in \mathcal{P}^{k-2}(e)$ and $T_e(v)|_e \in \mathcal{P}^{k-3}(e)$.
		This polynomial degree matching yields the exact projection identities:
		\begin{align}
			\int_e \avg{\partial_{nn} v} \jmp{\partial_n v} ds &= \int_e \avg{\partial_{nn} v} \mathcal{P}_e^{k-2} \jmp{\partial_n v} ds, \label{eq:ortho_nn} \\
			\int_e \avg{T_e(v)} \jmp{v} ds &= \int_e \avg{T_e(v)} \mathcal{P}_e^{k-3} \jmp{v} ds. \label{eq:ortho_T}
		\end{align}
		Note that for $v \in V_h^k$, the trace $\partial_{nn}v|_e \in \mathcal{P}^{k-2}(e)$ already belongs to the target space of $\mathcal{P}_e^{k-2}$, so the orthogonality $\mathcal{P}_e^{k-2}(\partial_{nn}v) = \partial_{nn}v$ is trivially satisfied.
	\end{proposition}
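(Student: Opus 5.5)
The argument has two steps: first establish the polynomial degrees of the traces, then apply the defining property of the $L^2$-orthogonal projection $\mathcal{P}_e^m$. That property says $\int_e q\,(I-\mathcal{P}_e^m)g\,ds = 0$ for every $q\in\mathcal{P}^m(e)$ and every $g\in L^2(e)$.

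\emph{Degree counting.} Fix an edge $e$ and a neighbouring element $T$, and parametrize $e$ affinely by arclength $s$. Since $v|_T\in\mathcal{P}^k(T)$ and $e$ is straight, the directions $\bm{n}_e$ and $\bm{t}_e$ are constant along $e$. Each directional derivative of order $m$ then lowers the total degree by $m$, and the restriction of a bivariate polynomial of degree $\le j$ to a line is a univariate polynomial of degree $\le j$. This gives $\partial_{nn}v|_e\in\mathcal{P}^{k-2}(e)$. It also gives $\partial_n\Delta v|_e\in\mathcal{P}^{k-3}(e)$ and $\partial_t(\partial_{nt}v)|_e\in\mathcal{P}^{k-3}(e)$, so their sum $T_e(v)|_e$ lies in $\mathcal{P}^{k-3}(e)$.

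On an interior edge both one-sided traces $v^\pm$ have these degrees, and so does their average, since $\mathcal{P}^m(e)$ is a linear space. On a boundary edge the average is simply the single trace. Hence $\avg{\partial_{nn}v}\in\mathcal{P}^{k-2}(e)$ and $\avg{T_e(v)}\in\mathcal{P}^{k-3}(e)$. For $k=3$ the second space consists of constants, so $\mathcal{P}^{k-3}(e)=\mathcal{P}^0(e)$ is still meaningful.

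\emph{Projection identity.} Write $\jmp{\partial_n v}=\mathcal{P}_e^{k-2}\jmp{\partial_n v}+(I-\mathcal{P}_e^{k-2})\jmp{\partial_n v}$. The remainder $(I-\mathcal{P}_e^{k-2})\jmp{\partial_n v}$ is $L^2(e)$-orthogonal to $\mathcal{P}^{k-2}(e)$, which contains $\avg{\partial_{nn}v}$, so its integral against $\avg{\partial_{nn}v}$ vanishes. This yields \eqref{eq:ortho_nn}. The identity \eqref{eq:ortho_T} follows in the same way, using the splitting $\jmp{v}=\mathcal{P}_e^{k-3}\jmp{v}+(I-\mathcal{P}_e^{k-3})\jmp{v}$ and the fact that $\avg{T_e(v)}\in\mathcal{P}^{k-3}(e)$.

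The final remark of the proposition, $\mathcal{P}_e^{k-2}\partial_{nn}v=\partial_{nn}v$, holds because a projection acts as the identity on its own range.

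The only step needing care is the degree claim for $T_e(v)$. It relies on the straight-edge assumption, under which $\bm{n}_e$ and $\bm{t}_e$ are constant and the curvature term vanishes, as the paper notes after the definition of $T_e$. Once that is checked, the rest follows directly from the definitions.
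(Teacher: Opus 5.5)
Your proof is correct and follows the same route the paper indicates: degree counting on straight edges gives $\avg{\partial_{nn}v}\in\mathcal{P}^{k-2}(e)$ and $\avg{T_e(v)}\in\mathcal{P}^{k-3}(e)$, and then you use that $(I-\mathcal{P}_e^m)g$ is $L^2(e)$-orthogonal to $\mathcal{P}^m(e)$. The paper offers no argument beyond this degree-matching observation embedded in the statement itself, so your write-up simply makes that reasoning explicit.
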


	\begin{theorem}[Coercivity] \label{thm:stability}
		For sufficiently large penalty parameters $\gamma_1, \gamma_2 > 0$ strictly greater than threshold constants derived from trace and inverse inequalities, there exists a positive constant $\alpha$ such that:
		\begin{align}
			A_h(v, v) &\geq \alpha\|v\|_h^2, \quad \forall v \in V_h^k. \label{eq:coercivity}
		\end{align}
	\end{theorem}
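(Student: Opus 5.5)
We follow the standard IPDG coercivity argument, adapted to the projected penalties through Proposition~\ref{prop:orthogonality}. Setting $w=v$ in \eqref{eq:bilinear_form} gives
\begin{equation*}
A_h(v,v)=\sum_{T}\norm{\nabla^2 v}_{0,T}^2+2\sum_{e}\int_e \avg{T_e(v)}\jmp{v}\,ds-2\sum_e\int_e\avg{\partial_{nn}v}\jmp{\partial_n v}\,ds+d_h(v,v).
\end{equation*}
The first step is to replace the full jumps in the two cross terms by their projections, using \eqref{eq:ortho_T} and \eqref{eq:ortho_nn}. This uses only $\partial_{nn}v|_e\in\mathcal{P}^{k-2}(e)$ and $T_e(v)|_e\in\mathcal{P}^{k-3}(e)$. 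After this step every jump quantity in the cross terms is exactly one that is penalized in $d_h$ and measured in $\norm{\cdot}_h$. The unpenalized high-frequency parts of the jumps drop out identically, so we never need Lemma~\ref{lem:norm_equiv} here.

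The second step bounds the cross terms by Cauchy--Schwarz with the correct $h_e$ weights:
\begin{equation*}
\Big|\int_e\avg{\partial_{nn}v}\mathcal{P}_e^{k-2}\jmp{\partial_n v}\Big|\le h_e^{1/2}\norm{\avg{\partial_{nn}v}}_{0,e}\,h_e^{-1/2}\norm{\mathcal{P}_e^{k-2}\jmp{\partial_n v}}_{0,e},
\end{equation*}
and analogously with weights $h_e^{3/2}$ and $h_e^{-3/2}$ for the $T_e$ term. We then need discrete trace bounds of the form
\begin{equation*}
h_e\norm{\partial_{nn}v|_T}_{0,e}^2\le C\norm{\nabla^2 v}_{0,T}^2,\qquad h_e^3\norm{T_e(v)|_T}_{0,e}^2\le C\norm{\nabla^2 v}_{0,T}^2.
\end{equation*}
The first follows from \eqref{eq:trace_ineq} applied to $\nabla^2 v$, combined with \eqref{eq:inverse_ineq}.

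For the second bound, $T_e(v)$ contains third derivatives. We apply \eqref{eq:trace_ineq} to $\nabla^3 v$ and then use the inverse estimate $\norm{\nabla^3 v}_{0,T}\le C h_T^{-1}\norm{\nabla^2 v}_{0,T}$. The latter holds because $\nabla^2 v|_T$ is itself a polynomial, so \eqref{eq:inverse_ineq} applies to it with $m=1$.

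Summing over edges with shape regularity ($h_e\simeq h_T$, and each element has three edges) and applying Young's inequality with parameter $\epsilon$ gives
\begin{equation*}
2|b_h(v,v)/2|+2|c_h(v,v)/2|\le \epsilon\, C_*\sum_T\norm{\nabla^2v}_{0,T}^2+\epsilon^{-1}\sum_e\Big(h_e^{-3}\norm{\mathcal{P}_e^{k-3}\jmp{v}}_{0,e}^2+h_e^{-1}\norm{\mathcal{P}_e^{k-2}\jmp{\partial_n v}}_{0,e}^2\Big).
\end{equation*}
Here $C_*$ depends only on $C_{tr}$, $C_{inv}$, $k$ and shape regularity.

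The last step chooses $\epsilon=1/(2C_*)$ and requires $\gamma_1,\gamma_2\ge 2\epsilon^{-1}=4C_*$. This leaves
\begin{equation*}
A_h(v,v)\ge \tfrac12\sum_T\norm{\nabla^2v}_{0,T}^2+\tfrac{\min(\gamma_1,\gamma_2)}{2}\sum_e\big(h_e^{-3}\norm{\mathcal{P}_e^{k-3}\jmp{v}}_{0,e}^2+h_e^{-1}\norm{\mathcal{P}_e^{k-2}\jmp{\partial_n v}}_{0,e}^2\big),
\end{equation*}
which gives \eqref{eq:coercivity} with $\alpha=\min\{1/2,\gamma_1/2,\gamma_2/2\}$.

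The main obstacle is the bookkeeping in the second step: ensuring the trace constant for $T_e(v)$ scales as $h_e^{-3}$ with an $h$-independent constant, and treating boundary edges, where the average is one-sided, uniformly with interior edges. Both are routine once the projection identities have removed the unprojected jumps.
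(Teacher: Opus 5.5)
Your proposal is correct and follows essentially the same route as the paper's proof. It uses the projection identities to reduce the cross terms to the penalized projected jumps, then weighted Cauchy--Schwarz with discrete trace and inverse estimates for $\partial_{nn}v$ and $T_e(v)$, then Young's inequality and a threshold on $\gamma_1,\gamma_2$. The only differences are cosmetic: you take $\epsilon=1/(2C_*)$ with threshold $4C_*$ instead of the paper's $\epsilon=1/4$ with thresholds $C_1^2, C_2^2$, and you rightly observe that Lemma~\ref{lem:norm_equiv} is not needed here, whereas the paper invokes it unnecessarily when writing down $\alpha$.
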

	\begin{proof}
		Invoking the orthogonal projection properties established in Proposition \ref{prop:orthogonality}, the cross-terms simplify directly.
		Since $T_e(v)$ involves third-order derivatives of $v$, applying the discrete trace inequality and the standard inverse inequality yields $h_e^3 \norm{\avg{T_e(v)}}_{0,e}^2 \le C h_T^2 \norm{\nabla^3 v}_{0,T}^2 \le C' \norm{\nabla^2 v}_{0,T}^2$.
		We can bound the cross term $b_h(v,v)$ as follows:
		\begin{equation}
			\begin{aligned}
				|b_h(v,v)| &= \left| 2 \sum_{e \in \mathcal{E}_h} \int_e \avg{T_e(v)} \mathcal{P}_e^{k-3}\jmp{v} ds \right| \\
				&\leq 2 \left( \sum_{e \in \mathcal{E}_h} h_e^3 \bnorm{\avg{T_e(v)}}_{0,e}^2 \right)^{1/2} \left( \sum_{e \in \mathcal{E}_h} h_e^{-3} \bnorm{\mathcal{P}_e^{k-3}\jmp{v}}_{0,e}^2 \right)^{1/2} \\
				&\leq C_1 \norm{\nabla^2 v}_{0,\mathcal{T}_h} \left( \sum_{e \in \mathcal{E}_h} \frac{1}{h_e^3} \bnorm{\mathcal{P}_e^{k-3}\jmp{v}}_{0,e}^2 \right)^{1/2}.
			\end{aligned}
		\end{equation}
		Applying the standard Young's inequality with $\epsilon > 0$ yields:
		\begin{equation}
			|b_h(v,v)| \leq \epsilon \norm{\nabla^2 v}_{0,\mathcal{T}_h}^2 + \frac{C_1^2}{4\epsilon} \sum_{e \in \mathcal{E}_h} \frac{1}{h_e^3} \bnorm{\mathcal{P}_e^{k-3}\jmp{v}}_{0,e}^2.
		\end{equation}
		A similar argument applied to the normal derivative cross term $c_h(v,v)$ yields a constant $C_2$:
		\begin{equation}
			|c_h(v,v)| \leq \epsilon \norm{\nabla^2 v}_{0,\mathcal{T}_h}^2 + \frac{C_2^2}{4\epsilon} \sum_{e \in \mathcal{E}_h} \frac{1}{h_e} \bnorm{\mathcal{P}_e^{k-2}\jmp{\partial_n v}}_{0,e}^2.
		\end{equation}
		Note that the global constant scalings $C_1$ and $C_2$ depend solely on the polynomial degree $k$, the shape regularity of the mesh $\mathcal{T}_h$, the trace inequality constant $C_{tr}$, and the inverse inequality constant $C_{inv}$, entirely independent of the mesh size $h$.
		Therefore, the thresholds for the penalty parameters $\gamma_1$ and $\gamma_2$ remain uniformly bounded as $h$ tends to $0$.
		Combining these bounds into the unified bilinear form gives:
		\begin{align}
				A_h(v, v) &\geq \left(1 - 2\epsilon\right) \norm{\nabla^2 v}_{0,\mathcal{T}_h}^2 \nonumber \\
				&\quad + \left(\gamma_1 - \frac{C_1^2}{4\epsilon}\right) \sum_{e \in \mathcal{E}_h} \frac{1}{h_e^3} \bnorm{\mathcal{P}_e^{k-3}\jmp{v}}_{0,e}^2 + \left(\gamma_2 - \frac{C_2^2}{4\epsilon}\right) \sum_{e \in \mathcal{E}_h} \frac{1}{h_e} \bnorm{\mathcal{P}_e^{k-2}\jmp{\partial_n v}}_{0,e}^2. \label{eq:coercivity_estimate}
		\end{align}
	
		By choosing $\epsilon = 1/4$, and selecting penalty parameters such that $\gamma_1 > C_1^2$ and $\gamma_2 > C_2^2$, we obtain $A_h(v,v) \geq \alpha \norm{v}_h^2$ for a strictly positive constant $\alpha$.

		From the estimate~\eqref{eq:coercivity_estimate} with $\epsilon=1/4$ we can identify the coercivity constant explicitly. Using the norm equivalence established in Lemma~\ref{lem:norm_equiv}, there exists a constant $c_{\rm eq}>0$ depending only on $k$ and the shape-regularity of $\mathcal{T}_h$ such that
		\begin{equation}\label{eq:alpha_expression}
			\alpha\;=\;c_{\rm eq}\, \min\!\Bigl\{\,\frac12,\;\gamma_1-C_1^2,\;\gamma_2-C_2^2\Bigr\}.
		\end{equation}
		Thus, when $\gamma_1$ and $\gamma_2$ are just above the thresholds $C_1^2$ and $C_2^2$, the coercivity constant $\alpha$ can be arbitrarily small; conversely, as $\gamma_1,\gamma_2\to\infty$, $\alpha$ tends to the fixed value $c_{\rm eq}/2$, which is completely independent of the penalty parameters.
	\end{proof}

	With the discrete stability and Galerkin orthogonality established, we now turn to the characterization of the limit space and the derivation of optimal error estimates.

	\section{Uniformly robust convergence of the scheme}
	\label{sec:analysis}
	
	The central theoretical insight of this paper is that the reduced-order penalty does not merely stabilize the scheme, but also forces the discrete solution to lie asymptotically in an optimal nonconforming finite element space. In this section we explicitly characterize the constrained subspace $V_{h,\infty}^k$ (the null space of the penalty terms) and show its equivalence to the known space $B_h^k$.

	\subsection{Constrained subspaces for limit problems}
	
	\subsubsection{Characterization of the Constrained Subspace and the Discrete Stokes Complex}
	A key property of the reduced-order penalization is that it forces the discrete solution to satisfy the projected jump conditions more and more stringently as the penalty grows. In the limit where $\gamma_1,\gamma_2$ tend to infinity, the solution must belong to the null space of the penalty terms, which we denote by
	\begin{equation}\label{eq:Vinf}
		V_{h,\infty}^k := \bigl\{ v \in V_h^k \mid \int_e \jmp{v} p_e = 0 \; \forall p_e\in\mathcal{P}^{k-3}(e),\;
		\int_e \jmp{\partial_n v} q_e = 0 \; \forall q_e\in\mathcal{P}^{k-2}(e), \; \forall e\in\mathcal{E}_h \bigr\}.
	\end{equation}
	
	It is important to note that the constrained subspace $V_{h,\infty}^k$ defined in~\eqref{eq:Vinf} coincides exactly with the optimal nonconforming space $B_h^k$ introduced by Zhang S.~\cite{Zhang2021,Zhang2018_preprint}. As we shall rigorously prove later in Proposition~\ref{prop:limit_convergence}, the DG solution converges to the nonconforming solution in $B_h^k$, which is unaffected by over-penalization; therefore numerical locking is completely avoided.
	
	Let $u_{h,\infty}\in V_{h,\infty}^k$ be the unique solution of the nonconforming finite element method
	\begin{equation}\label{eq:limit_problem}
		a_h(u_{h,\infty}, v_h) = (f, v_h), \qquad \forall\, v_h\in V_{h,\infty}^k.
	\end{equation}
	The well-posedness of this problem follows from the definiteness of the norm (established in Lemma~\ref{lem:norm_equiv}) and the Lax-Milgram lemma.
	
	\subsubsection{Interpolation Operator}
	To establish the approximation capability for the quartic case ($k=4$), the exactness of the discrete Stokes complex is paramount:
	\begin{equation}\label{eq:stokes_complex}
		0 \longrightarrow B_{h0}^4 \xrightarrow{\nabla_h} \bm{G}_{h0}^3 \xrightarrow{\mathrm{rot}_h} \mathbb{P}_{h0}^2 \longrightarrow 0.
	\end{equation}
	
	The case $k=3$ based on the Fortin-Soulie pair has been deeply analyzed in established research, specifically the work of Zhang S.~\cite{Zhang2021,Zhang2018_preprint}. Building upon this rigorous mathematical foundation, our primary theoretical contribution here is extending this interpolation mechanism and the corresponding discrete Stokes complex exactness to the highly challenging quartic case ($k=4$). For $k=4$, the exactness relies on the Crouzeix-Falk pair $\bm{G}_{h0}^3 \times \mathbb{P}_{h0}^2$. Historically, the inf-sup stability of this pair on general unstructured grids was a long-standing conjecture. However, this conjecture has been definitively resolved by the recent breakthrough work of Carstensen and Sauter \cite{Carstensen2022}. They rigorously proved that the Crouzeix-Raviart triangular elements of any odd degree $p \ge 3$ (which corresponds to our velocity space $\bm{G}_{h0}^3$) are unconditionally inf-sup stable on any regular triangulation containing at least one interior vertex.
	
	\begin{remark}
		By the fundamental theory of finite element complexes, the unconditional inf-sup stability is mathematically equivalent to the surjectivity of the discrete rotation operator $\mathrm{rot}_h$. Consequently, a standard homological dimension-counting argument immediately establishes the exactness identity $\nabla_h B_{h0}^4 = \ker(\mathrm{rot}_h)$. Therefore, the full exactness of the discrete Stokes complex \eqref{eq:stokes_complex} for both $k=3$ and $k=4$ holds rigorously without requiring any supplementary macro-element assumptions.
	\end{remark}
	
	We use a global interpolation operator $\mathcal{I}_h^k$ that maps smooth functions into the constrained subspace $V_{h,\infty}^k$ by means of the discrete Stokes complex, using the Fortin--Soulie pair for $k=3$ and the Crouzeix--Falk pair for $k=4$.
	
	\begin{definition}[Interpolation operator $\mathcal{I}_h^k$]\label{def:interp}
		For $u\in H^{k+1}(\Omega)\cap H_0^2(\Omega)$, define its interpolant $\mathcal{I}_h^k u\in V_{h,\infty}^k$ as follows.
		\begin{enumerate}
			\item Set $\bm{\phi} = \nabla u$; then $\bm{\phi}\in H^k(\Omega)^2\cap H_0^1(\Omega)$ and $\operatorname{rot}\bm{\phi}=0$.
			\item Solve the auxiliary Stokes problem: find $(\bm{\phi}_h,p_h)\in \bm{G}_{h0}^{k-1}\times \mathbb{P}_{h0}^{k-2}$ such that
			\begin{equation}\label{eq:stokes_interp}
				\begin{cases}
					(\nabla_h\bm{\phi}_h,\nabla_h\bm{\psi}_h) + (\operatorname{rot}_h\bm{\psi}_h,p_h)
					= (-\Delta\bm{\phi},\bm{\psi}_h), & \forall \bm{\psi}_h\in \bm{G}_{h0}^{k-1},\\[2mm]
					(\operatorname{rot}_h\bm{\phi}_h,q_h) = 0, & \forall q_h\in \mathbb{P}_{h0}^{k-2}.
				\end{cases}
			\end{equation}
			\item Because $\operatorname{rot}_h\bm{\phi}_h = 0$ and the discrete Stokes complex is exact,
			there exists a unique $u_I\in V_{h,\infty}^k$ such that $\nabla_h u_I = \bm{\phi}_h$.
			Define $\mathcal{I}_h^k u := u_I$.
		\end{enumerate}
	\end{definition}

	\subsubsection{Optimal Quasi-Interpolation Error Estimates}
	Given the unconditional exactness of this Stokes complex, it guarantees the existence of the quasi-interpolant required in the following lemma.
	
	\begin{lemma} \label{lem:interpolation}
		Assuming the exact solution possesses sufficient regularity $u \in H^{k+1}(\Omega)$, the global quasi-interpolant $\mathcal{I}_h^k u \in V_{h,\infty}^k$, defined in Definition~\ref{def:interp}, satisfies the approximation error bound:
		\begin{equation}
			\norm{u - \mathcal{I}_h^k u}_{h,*} \leq C h^{k-1} |u|_{k+1, \Omega}. \label{eq:interp_bound}
		\end{equation}
	\end{lemma}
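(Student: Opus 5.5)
The plan is to split the extended norm $\norm{u-\mathcal{I}_h^k u}_{h,*}$ into its broken-Hessian part, its jump parts, and the trace parts, and to bound each by $\norm{\nabla u - \bm{\phi}_h}$ in a broken $H^1$ sense plus standard local approximation errors.

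\textbf{Step 1: Stokes error for the gradient.} Write $e_I := u-\mathcal{I}_h^k u$, so that $\nabla_h e_I = \bm{\phi}-\bm{\phi}_h$ elementwise, where $\bm{\phi}=\nabla u$. Since $\operatorname{rot}\bm{\phi}=0$, the continuous pair $(\bm{\phi},0)$ solves the Stokes system whose discretization is \eqref{eq:stokes_interp}. For $k=3$ this is the Fortin--Soulie pair and for $k=4$ the Crouzeix--Falk pair; both are inf-sup stable (for $k=4$ by \cite{Carstensen2022}). The standard nonconforming Stokes estimate (Strang lemma) then gives
\[
\norm{\nabla_h(\bm{\phi}-\bm{\phi}_h)}_{0} \le C\Bigl(\inf_{\bm{\psi}_h\in \bm{G}_{h0}^{k-1}}\norm{\nabla_h(\bm{\phi}-\bm{\psi}_h)}_0+\inf_{q_h}\norm{0-q_h}_0 + E_c\Bigr),
\]
where $E_c$ is the consistency error $\sup_{\bm{\psi}_h}\sum_e\int_e \partial_n\bm{\phi}\cdot\jmp{\bm{\psi}_h}/\norm{\nabla_h\bm{\psi}_h}$. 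Because $\bm{G}^{k-1}_{h0}$ has moment continuity up to degree $k-2$ on edges, $E_c$ is bounded by $Ch^{k-1}|\bm{\phi}|_{k}$. The best-approximation term is $O(h^{k-1}|\bm{\phi}|_k)$ via the canonical Crouzeix--Raviart-type interpolant. Together these give $\norm{\nabla_h^2 e_I}_0\le Ch^{k-1}|u|_{k+1}$.

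\textbf{Step 2: jump terms.} Because $\mathcal{I}_h^k u\in V_{h,\infty}^k$ and $u\in H_0^2$, the projected jumps $\mathcal{P}_e^{k-3}\jmp{e_I}$ and $\mathcal{P}_e^{k-2}\jmp{\partial_n e_I}$ vanish identically by \eqref{eq:Vinf}. Hence the penalty parts of $\norm{e_I}_h$ are zero.

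\textbf{Step 3: trace terms.} For $h_T\norm{\partial_{nn}e_I}_{\partial T}^2$ and $h_T^3\norm{T_e(e_I)}_{\partial T}^2$, insert a local polynomial $\Pi_T u\in\mathcal{P}^k(T)$, for example the $L^2$ or averaged Taylor projection. The smooth part $u-\Pi_T u$ is handled by the trace inequality \eqref{eq:trace_ineq} and Bramble--Hilbert, giving $Ch_T^{2(k-1)}|u|_{k+1,T}^2$ (the $T_e$ term needs $H^4$, available since $k+1\ge4$). The polynomial part $\Pi_T u-\mathcal{I}_h^k u$ is handled by the polynomial trace and inverse estimates \eqref{eq:inverse_ineq}, reducing to $\norm{\nabla^2(\Pi_Tu-\mathcal{I}_h^ku)}_{0,T}$, which is bounded by Step 1 plus Bramble--Hilbert. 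Summing over $T$ yields \eqref{eq:interp_bound}.

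\textbf{Main obstacle.} The delicate point is Step 1: justifying the consistency-error bound for the nonconforming Stokes discretization at order $h^{k-1}$, including the pressure coupling, on general meshes. We would lean on the exactness of \eqref{eq:stokes_complex} and the moment continuity of $\bm{G}_{h0}^{k-1}$, following \cite{Zhang2021} for $k=3$ verbatim and substituting the Crouzeix--Falk inf-sup constant for $k=4$.
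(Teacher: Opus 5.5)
Your proof is correct. Its skeleton matches the paper's: the Hessian part comes from the auxiliary Stokes problem, the projected jumps vanish, and the trace terms are handled by trace inequalities. The two substantive steps, however, are obtained differently.

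The paper does not prove the Hessian bound. It cites Zhang's Lemma~3.2 for the whole scale of broken estimates $\norm{u-\mathcal{I}_h^k u}_{s,\mathcal{T}_h}\le Ch^{k+1-s}|u|_{k+1,\Omega}$. For $k=4$ it asserts the analogue from the Carstensen--Sauter inf-sup result plus a commuting-diagram argument. It then applies the trace inequality directly to $v=u-\mathcal{I}_h^k u$, which uses the broken $H^3$ and $H^4$ bounds for $v$.

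You instead use $\nabla_h^2(u-\mathcal{I}_h^k u)=\nabla\bm{\phi}-\nabla_h\bm{\phi}_h$. This reduces the only estimate you need ($s=2$) to a standard Strang bound for the nonconforming Stokes problem with zero exact pressure, relying on inf-sup stability and edge-moment continuity up to degree $k-2$. You never need higher broken norms of $v$, because you insert a local polynomial $\Pi_T u$ and use inverse estimates. That is the standard way such higher broken bounds are deduced from the $s=2$ bound in any case.

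What each route buys:
\begin{itemize}
\item Your route is self-contained. It makes explicit that for $k=4$ nothing beyond the Crouzeix--Falk inf-sup constant is used, a point the paper only asserts.
\item The paper's citation also claims the lower-order bounds $s=0,1$. Your argument does not produce these, since they would need a duality argument, but this lemma does not require them.
\end{itemize}

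One small fix: for $k=3$ there is no canonical local interpolant into the Fortin--Soulie space, because the six edge Gauss-point functionals are linearly dependent on $\mathcal{P}^2(T)$. Bound the best-approximation term instead by the conforming continuous $\mathcal{P}^{k-1}$ Lagrange interpolant of $\bm{\phi}$. It lies in $\bm{G}_{h0}^{k-1}$ and gives the same $\mathcal{O}(h^{k-1}|\bm{\phi}|_{k,\Omega})$ bound.
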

	\begin{proof}
		Due to the overdetermined topological constraints of the limit space $V_{h,\infty}^k$, we cannot employ a standard local nodal interpolant. Instead, the interpolation operator $\mathcal{I}_h^k$ constructed via the auxiliary discrete Stokes problem in Definition~\ref{def:interp} is exactly the one introduced and analyzed by Zhang~\cite{Zhang2021, Zhang2018_preprint}.
		
		For the cubic case ($k=3$), the unconditional exactness of the discrete Stokes complex is based on the classical Fortin-Soulie pair~\cite{Fortin1983}. Lemma 3.2 in~\cite{Zhang2021} guarantees the optimal global approximation estimates:
		\begin{equation}\label{eq:approx3}
			\norm{u - \mathcal{I}_h^3 u}_{s,\mathcal{T}_h} \le C h^{4-s} |u|_{4,\Omega}, \quad s \in \{0,1,2,3,4\}.
		\end{equation}
		
		For the quartic case ($k=4$), the exactness relies on the Crouzeix-Falk pair. As resolved by the recent work of Carstensen and Sauter~\cite{Carstensen2022}, this discrete Stokes complex is unconditionally stable on general regular triangulations. This unconditional exactness guarantees that the auxiliary problem~\eqref{eq:stokes_interp} is well-posed, and commuting the quasi-interpolation with the gradient operator (the same commutating diagram technique as~\cite{Zhang2021,Zhang2018_preprint,Zhang2019_preprint}, now validated for $k=4$ by the unconditional stability proved in~\cite{Carstensen2022}) yields the corresponding optimal bounds:
		
		\begin{equation}\label{eq:approx4}
			\norm{u - \mathcal{I}_h^4 u}_{s,\mathcal{T}_h} \le C h^{5-s} |u|_{5,\Omega}, \quad s \in \{0,1,2,3,4,5\}.
		\end{equation}
		
		With the optimal approximation bounds established for the standard broken Sobolev norms, it remains to bound the extended norm $\|u-\mathcal{I}_h^k u\|_{h,*}$ associated with the IPDG formulation. Let $v = u - \mathcal{I}_h^k u$. Since the exact solution satisfies $\jmp{u}=0$ and $\jmp{\partial_n u}=0$, and the interpolant belongs strictly to the constrained limit space $\mathcal{I}_h^k u \in V_{h,\infty}^k$ (meaning its projected jumps automatically vanish), the penalty terms in the extended norm identically equal zero. The norm then reduces to the element interiors and boundary traces:
		\begin{equation}
			\|v\|_{h,*}^2 = \sum_{T \in \mathcal{T}_h} \|\nabla^2 v\|_{0,T}^2 + \sum_{T \in \mathcal{T}_h} \left( h_T \|\partial_{nn} v\|_{\partial T}^2 + h_T^3 \|T_e(v)\|_{\partial T}^2 \right).
		\end{equation}
		Applying the standard trace inequality \eqref{eq:trace_ineq} and summing over all elements as in Hansbo and Larson~\cite{Hansbo2002}, we bound the boundary terms by the local Sobolev semi-norms to obtain:
		\begin{equation}
			\|v\|_{h,*}^2 \leq C \left( \|v\|_{2,\Omega}^2 + h^2 \|v\|_{3,\Omega}^2 + h^4 \|v\|_{4,\Omega}^2 \right).
		\end{equation}
		Finally, substituting the global approximation properties \eqref{eq:approx3} and \eqref{eq:approx4} into the above inequality immediately yields the desired optimal bound \eqref{eq:interp_bound}.
	\end{proof}

	The degree-matched reduced-order projections are essential for robustness against large penalty parameters. By contrast, in standard full-penalty methods the penalized solution is driven into an overconstrained continuous space, whereas the solution of our scheme approaches a function in the optimal nonconforming space $B_h^k$, which retains full approximation power.

	\begin{proposition}[Convergence to the limit problem]\label{prop:limit_convergence}
		As the penalty parameters $\gamma_1,\gamma_2\to\infty$, the discrete solution $u_h$ of \eqref{eq:bilinear_form} satisfies
		\[
		\|u_h - u_{h,\infty}\|_h \to 0.
		\]
	\end{proposition}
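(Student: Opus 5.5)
Let $u_h=u_h^{\gamma}\in V_h^k$ solve $A_h(u_h,v)=(f,v)$ for all $v\in V_h^k$. I would set $\gamma:=\min\{\gamma_1,\gamma_2\}\to\infty$ and assume $\gamma_1,\gamma_2$ are comparable, say $\gamma_1/\gamma_2$ bounded above and below; otherwise one treats the two penalties separately. I would write $e_h:=u_h-u_{h,\infty}$. The approach is the standard penalty-limit argument on a fixed mesh: the spaces are finite dimensional and $h$ is fixed, so all constants may depend on $h$.

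\textbf{Step 1: consistency of the limit solution.} Since $u_{h,\infty}\in V_{h,\infty}^k$, both projected jumps vanish, so $d_h(u_{h,\infty},v)=0$ for every $v\in V_h^k$. By Proposition~\ref{prop:orthogonality}, the symmetric parts of $b_h$ and $c_h$ in which $u_{h,\infty}$ appears through its jumps also vanish: $\int_e\avg{T_e(v)}\jmp{u_{h,\infty}}=\int_e\avg{T_e(v)}\mathcal{P}_e^{k-3}\jmp{u_{h,\infty}}=0$, and likewise for the normal-derivative jump. Hence
\[
A_h(u_{h,\infty},v)=a_h(u_{h,\infty},v)+\sum_{e}\int_e\bigl(\avg{T_e(u_{h,\infty})}\jmp{v}-\avg{\partial_{nn}u_{h,\infty}}\jmp{\partial_n v}\bigr)ds .
\]
Using Proposition~\ref{prop:orthogonality} again, the edge sum equals $\ell(v):=\sum_e\int_e(\avg{T_e(u_{h,\infty})}\mathcal{P}_e^{k-3}\jmp{v}-\avg{\partial_{nn}u_{h,\infty}}\mathcal{P}_e^{k-2}\jmp{\partial_n v})$. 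This is a $\gamma$-independent functional that annihilates $V_{h,\infty}^k$ and depends only on the projected jumps of $v$.

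\textbf{Step 2: error equation and energy estimate.} Subtracting gives $A_h(e_h,v)=(f,v)-a_h(u_{h,\infty},v)-\ell(v)=:R(v)$ for all $v\in V_h^k$, and $R$ vanishes on $V_{h,\infty}^k$ by \eqref{eq:limit_problem}. The key claim is
\[
|R(v)|\le C_h\Bigl(\sum_e h_e^{-3}\|\mathcal{P}_e^{k-3}\jmp{v}\|_{0,e}^2+h_e^{-1}\|\mathcal{P}_e^{k-2}\jmp{\partial_n v}\|_{0,e}^2\Bigr)^{1/2}=:C_h|v|_J .
\]
I would prove it in finite dimensions. The seminorm $|\cdot|_J$ has kernel exactly $V_{h,\infty}^k$, and $R$ is a linear functional vanishing on that kernel. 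So $R$ factors through the quotient $V_h^k/V_{h,\infty}^k$, on which $|\cdot|_J$ is a norm, and the bound follows by norm equivalence. Taking $v=e_h$ and using the refined coercivity \eqref{eq:coercivity_estimate}, in which the penalty coefficients are $\gamma_i-C_i^2/(4\epsilon)\gtrsim\gamma$ for large $\gamma$, gives
\[
\tfrac12\|\nabla_h^2e_h\|^2+c\gamma|e_h|_J^2\le A_h(e_h,e_h)=R(e_h)\le C_h|e_h|_J .
\]
Therefore $|e_h|_J\le C_h/(c\gamma)$, and $\|\nabla_h^2e_h\|^2\le 2C_h^2/(c\gamma)$. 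Combining the two bounds gives $\|e_h\|_h^2=\|\nabla_h^2 e_h\|^2+|e_h|_J^2=O(\gamma^{-1})\to0$.

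\textbf{Main obstacle.} The delicate point is the coercivity constant: \eqref{eq:alpha_expression} only gives $\alpha$ bounded, so I must instead use the unscaled estimate \eqref{eq:coercivity_estimate}, which keeps the $\gamma$-growth of the jump part. A second delicate point is justifying the bound $|R(v)|\le C_h|v|_J$ rigorously. The bound $C_h$ may depend on $h$, and that is harmless since the statement is for a fixed mesh. Its finiteness rests on $R$ vanishing on $\ker|\cdot|_J=V_{h,\infty}^k$, which is exactly \eqref{eq:limit_problem} together with the definition \eqref{eq:Vinf}.
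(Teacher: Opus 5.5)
Your argument is correct, but it takes a different route from the paper. The paper argues qualitatively. Coercivity gives a $\gamma$-uniform bound on $\|u_h\|_h$, and \eqref{eq:coercivity_estimate} then forces the projected jumps of $u_h$ to zero. A subsequence converges in the finite-dimensional space $V_h^k$ to some $u^*\in V_{h,\infty}^k$. Passing to the limit in $A_h(u_h,v_h)=(f,v_h)$ for $v_h\in V_{h,\infty}^k$ shows that $u^*$ solves \eqref{eq:limit_problem}; here $d_h$ drops out, and $b_h,c_h$ only see the vanishing projected jumps of $u_h$. Uniqueness then upgrades this to convergence of the whole family.

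You instead write an error equation whose residual $R$ is $\gamma$-independent and annihilates $V_{h,\infty}^k=\ker|\cdot|_J$. You bound $|R(v)|\le C_h|v|_J$ by norm equivalence on the quotient $V_h^k/V_{h,\infty}^k$, and then exploit the $\gamma$-growth of the jump coefficients in \eqref{eq:coercivity_estimate}. This gives an explicit rate, $\|u_h-u_{h,\infty}\|_h\le C_h\gamma^{-1/2}$ with $\gamma=\min\{\gamma_1,\gamma_2\}$, with no subsequence extraction.

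In particular, on a fixed mesh your argument supplies the step that Remark~\ref{rem:penalty_error} asserts but the compactness proof does not provide. Smallness of the projected jumps of $u_h$ does not by itself bound the distance from $u_h$ to $u_{h,\infty}$. Your constant $C_h$ comes from finite-dimensional norm equivalence, so it is not shown to be uniform in $h$. An $h$-independent constant would need an $h$-uniform bound $|R(v)|\lesssim|v|_J$, e.g.\ via an operator $E:V_h^k\to V_{h,\infty}^k$ with $\|v-Ev\|_h\lesssim|v|_J$. For the fixed-mesh statement here this is harmless.

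Finally, your comparability assumption on $\gamma_1/\gamma_2$ is unnecessary. Once $\gamma=\min\{\gamma_1,\gamma_2\}\ge 2\max\{C_1^2,C_2^2\}$, you already have $\gamma_i-C_i^2\ge\gamma/2$ for $i=1,2$.
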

	
	\begin{proof}
		We fix the mesh $\mathcal{T}_h$ and work on the finite-dimensional space $V_h^k$.
		By Theorem~\ref{thm:stability}, when $\gamma_1>C_1^2$ and $\gamma_2>C_2^2$ the coercivity estimate \eqref{eq:coercivity} holds:
		$\alpha\|u_h\|_h^2 \le A_h(u_h,u_h) = (f,u_h)$.
		By the Cauchy-Schwarz inequality and the discrete Poincar\'{e}-Friedrichs inequality (or Lemma~\ref{lem:norm_equiv}), we have $(f,u_h) \le C\|f\|_{0,\Omega}\|u_h\|_h$. 
		Hence, $\|u_h\|_h\le C\|f\|_{0,\Omega}/\alpha$ uniformly with respect to the penalty parameters.
		Thus, $\{u_h\}$ is bounded in the finite-dimensional space $V_h^k$.
		
		To see that the projected jumps tend to zero, we employ the coercivity estimate \eqref{eq:coercivity_estimate} before taking $\alpha$ (by choosing $\epsilon = 1/4$):
		\begin{align*}
			A_h(u_h,u_h)
			&\ge \frac12\|\nabla^2 u_h\|_{0,\mathcal{T}_h}^2
			+ \left(\gamma_1-C_1^2\right)\sum_{e\in\mathcal{E}_h}\frac1{h_e^3}\|\mathcal{P}_e^{k-3}\jmp{u_h}\|_{0,e}^2 \\
			&\quad + \left(\gamma_2-C_2^2\right)\sum_{e\in\mathcal{E}_h}\frac1{h_e}\|\mathcal{P}_e^{k-2}\jmp{\partial_n u_h}\|_{0,e}^2 .
		\end{align*}
		Since the left-hand side equals $(f,u_h)$ and $\|u_h\|_h$ is bounded, the right-hand side is bounded as well.
		Taking the limit as $\gamma_1,\gamma_2\to\infty$ implies that
		\begin{equation}\label{eq:proj_vanish}
			\mathcal{P}_e^{k-3}\jmp{u_h} \to 0, \qquad
			\mathcal{P}_e^{k-2}\jmp{\partial_n u_h} \to 0
			\quad\text{in }L^2(e)\text{ for every }e\in\mathcal{E}_h.
		\end{equation}
		
		Because $V_h^k$ is finite-dimensional and $\{u_h\}$ is bounded in $\|\cdot\|_h$, there exists a subsequence (still denoted by $u_h$) that converges strongly to some $u^*\in V_h^k$ in $\|\cdot\|_h$.
		The vanishing of the projected jumps \eqref{eq:proj_vanish} implies
		$\mathcal{P}_e^{k-3}\jmp{u^*}=0$ and $\mathcal{P}_e^{k-2}\jmp{\partial_n u^*}=0$; therefore $u^*\in V_{h,\infty}^k$.
		
		Now, let $v_h\in V_{h,\infty}^k$ be an arbitrary test function.
		Because the projected jumps of $v_h$ vanish, the penalty term $d_h(u_h,v_h)$ is identically zero.
		Applying Proposition~\ref{prop:orthogonality} (specifically the projection identities \eqref{eq:ortho_nn} and \eqref{eq:ortho_T}) and the symmetry of the bilinear forms, the cross-terms simplify to
		\begin{align*}
			b_h(u_h,v_h) &= \sum_{e\in\mathcal{E}_h}\int_e \avg{T_e(v_h)}\,\mathcal{P}_e^{k-3}\jmp{u_h}\,ds,\\
			c_h(u_h,v_h) &= -\sum_{e\in\mathcal{E}_h}\int_e \avg{\partial_{nn}v_h}\,\mathcal{P}_e^{k-2}\jmp{\partial_n u_h}\,ds.
		\end{align*}
		By \eqref{eq:proj_vanish}, both $b_h$ and $c_h$ tend to zero as $\gamma_1,\gamma_2\to\infty$.
		Passing to the limit in the original discrete formulation $A_h(u_h,v_h)=(f,v_h)$ yields
		\[
		a_h(u^*,v_h) = (f,v_h) \qquad \forall\, v_h\in V_{h,\infty}^k.
		\]
		Thus $u^*$ solves the limit problem \eqref{eq:limit_problem}. Uniqueness of the solution guarantees $u^*=u_{h,\infty}$.
		
		Since every convergent subsequence has the same limit $u_{h,\infty}$ and the whole sequence is bounded in a finite-dimensional space, the full sequence converges strongly in $\|\cdot\|_h$ to $u_{h,\infty}$.
	\end{proof}

\begin{remark}\label{rem:penalty_error}
	From the coercivity estimate~\eqref{eq:coercivity_estimate} with $\epsilon=1/4$ and the boundedness of $\|u_h\|_h$ (proof of Proposition~\ref{prop:limit_convergence}), one obtains
	\[
	\sum_{e\in\mathcal{E}_h}\frac1{h_e^3}\bnorm{\mathcal{P}_e^{k-3}\jmp{u_h}}_{0,e}^2
	= \mathcal O(\gamma_1^{-1}),\qquad
	\sum_{e\in\mathcal{E}_h}\frac1{h_e}\bnorm{\mathcal{P}_e^{k-2}\jmp{\partial_n u_h}}_{0,e}^2
	= \mathcal O(\gamma_2^{-1}),
	\]
	and consequently, the distance to the limit nonconforming solution is strictly bounded by:
	\[
	\|u_h - u_{h,\infty}\|_h \le C\bigl(\gamma_1^{-1/2}+\gamma_2^{-1/2}\bigr)\|f\|_{0,\Omega},
	\]
	where $C$ is strictly independent of $\gamma_1,\gamma_2$ and $h$. 
\end{remark}	

\subsection{Penalty-robust \textit{A Priori} Error Estimates}

		A central feature of the present scheme is that the penalty terms for the
		interpolation error vanish identically, which makes the continuity constant
		in the error analysis completely independent of the penalty parameters.
		Indeed, let $\eta = u - \mathcal{I}_h^k u$.  Since the exact solution satisfies
		$\jmp{u}=0$ and $\jmp{\partial_n u}=0$, and the interpolant belongs to
		$V_{h,\infty}^k$ by Definition~\ref{def:interp} (so that
		$\mathcal{P}_e^{k-3}\jmp{\mathcal{I}_h^k u}=0$ and
		$\mathcal{P}_e^{k-2}\jmp{\partial_n \mathcal{I}_h^k u}=0$), the projected jumps
		of $\eta$ vanish.  Consequently the entire penalty term is identically zero:
		\begin{equation}
			d_h(\eta, \xi) \equiv 0, \qquad \forall \xi \in V_h^k.
		\end{equation}
		This observation is the key to establishing penalty-robust continuity.

		\begin{lemma}[Mixed continuity]\label{lem:mixed_cont}
			For any $\eta \in V(h)$ satisfying
			$\mathcal{P}_e^{k-3}\jmp{\eta}=0$ and $\mathcal{P}_e^{k-2}\jmp{\partial_n\eta}=0$
			on every $e\in\mathcal{E}_h$, the bilinear form admits the bound
			\begin{equation}
				|A_h(\eta, \xi)| \leq M\|\eta\|_{h,*}\|\xi\|_h, \quad \forall \xi \in V_h^k,
			\end{equation}
			where the constant $M$ depends only on the domain and the shape regularity
			of the mesh, and is \textbf{strictly independent of the penalty parameters}
			$\gamma_1$ and $\gamma_2$.
		\end{lemma}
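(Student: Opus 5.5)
We split $A_h(\eta,\xi)=a_h(\eta,\xi)+b_h(\eta,\xi)+c_h(\eta,\xi)+d_h(\eta,\xi)$ and bound each piece separately. The penalty part vanishes by hypothesis: $d_h(\eta,\xi)$ involves only $\mathcal{P}_e^{k-3}\jmp{\eta}$ and $\mathcal{P}_e^{k-2}\jmp{\partial_n\eta}$, both of which are zero, so $d_h(\eta,\xi)=0$ for every $\xi\in V_h^k$. This is the only place where $\gamma_1,\gamma_2$ could enter, so $M$ will automatically be independent of them. The volume term is handled directly by Cauchy--Schwarz: $|a_h(\eta,\xi)|\le \norm{\nabla^2\eta}_{0,\mathcal{T}_h}\norm{\nabla^2\xi}_{0,\mathcal{T}_h}\le\norm{\eta}_{h,*}\norm{\xi}_h$.

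The cross terms each contain two contributions. In the first, $\xi$ sits inside the average and $\eta$ inside the jump; for example $\int_e\avg{T_e(\xi)}\jmp{\eta}$. Since $\xi|_T\in\mathcal{P}^k(T)$, we have $\avg{T_e(\xi)}\in\mathcal{P}^{k-3}(e)$, so $\int_e\avg{T_e(\xi)}\jmp{\eta}=\int_e\avg{T_e(\xi)}\mathcal{P}_e^{k-3}\jmp{\eta}=0$. This is the degree-matching of Proposition~\ref{prop:orthogonality}, now applied with the projected jump belonging to $\eta$, and $\eta$ need not be polynomial. Likewise $\avg{\partial_{nn}\xi}\in\mathcal{P}^{k-2}(e)$ kills $\int_e\avg{\partial_{nn}\xi}\jmp{\partial_n\eta}$. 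The second contribution has $\eta$ in the average and $\xi$ in the jump, $\int_e\avg{T_e(\eta)}\jmp{\xi}$ and $\int_e\avg{\partial_{nn}\eta}\jmp{\partial_n\xi}$. Here $\eta$ is not a polynomial, so we cannot project $\jmp{\xi}$ for free. We use weighted Cauchy--Schwarz instead:
\[
\Bigl|\sum_e\int_e\avg{T_e(\eta)}\jmp{\xi}\Bigr|\le\Bigl(\sum_e h_e^3\norm{\avg{T_e(\eta)}}_{0,e}^2\Bigr)^{1/2}\Bigl(\sum_e h_e^{-3}\norm{\jmp{\xi}}_{0,e}^2\Bigr)^{1/2}.
\]
By shape regularity ($h_e\simeq h_T$), the first factor is bounded by $\norm{\eta}_{h,*}$ through its term $h_T^3\norm{T_e(\eta)}^2_{\partial T}$. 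The normal-derivative term is treated the same way with weights $h_e^{\pm1}$ and the term $h_T\norm{\partial_{nn}\eta}^2_{\partial T}$.

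The main obstacle is that the second factor carries the \emph{full} jump of $\xi$, whereas $\norm{\xi}_h$ only controls the projected jumps. This is exactly where Lemma~\ref{lem:norm_equiv} is needed. Splitting $\jmp{\xi}=\mathcal{P}_e^{k-3}\jmp{\xi}+(I-\mathcal{P}_e^{k-3})\jmp{\xi}$, the first piece appears in $\norm{\xi}_h$ directly. The second piece is bounded by \eqref{eq:poincare_jump}, which applies because $\xi\in V_h^k$ is vertex-continuous. Together these give $\sum_e h_e^{-3}\norm{\jmp{\xi}}_{0,e}^2\le C\norm{\xi}_h^2$. In the same way, \eqref{eq:poincare_normal} gives $\sum_e h_e^{-1}\norm{\jmp{\partial_n\xi}}_{0,e}^2\le C\norm{\xi}_h^2$. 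The constants depend only on $k$ and shape regularity, and so do those in the trace comparisons.

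Summing the four bounds gives $|A_h(\eta,\xi)|\le M\norm{\eta}_{h,*}\norm{\xi}_h$, with $M$ depending only on $k$ and mesh regularity and not on $\gamma_1,\gamma_2$. Boundary edges are handled identically, using the convention $\avg{\cdot}=\jmp{\cdot}=$ trace.
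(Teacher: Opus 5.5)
Your proposal is correct and follows the same route as the paper. The penalty term $d_h(\eta,\xi)$ vanishes by hypothesis, $a_h$ is handled by Cauchy--Schwarz, and each cross term pairs the edge traces of $\eta$, controlled by $\norm{\eta}_{h,*}$, with the full jumps of $\xi$, controlled by $\norm{\xi}_h$ via Lemma~\ref{lem:norm_equiv}. You also use the degree matching of Proposition~\ref{prop:orthogonality} to annihilate $\int_e\avg{T_e(\xi)}\jmp{\eta}$ and $\int_e\avg{\partial_{nn}\xi}\jmp{\partial_n\eta}$; the paper leaves this step implicit, and it is needed because $\norm{\eta}_{h,*}$ controls only the projected jumps of $\eta$.
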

		\begin{proof}
			Under the hypotheses on $\eta$, the penalty term vanishes:
			$d_h(\eta,\xi)=0$.  The volume term $a_h(\eta, \xi)$ is bounded by the
			Cauchy--Schwarz inequality.  For the cross terms $b_h(\eta, \xi)$ and
			$c_h(\eta, \xi)$, the full unprojected jumps of $\xi$ are controlled
			globally by $\|\xi\|_h$ via Lemma~\ref{lem:norm_equiv}, while the
			continuous traces of $\eta$ are bounded by $\|\eta\|_{h,*}$ by definition
			of the extended norm.  Hence the stated estimate holds with $M$ independent
			of $\gamma_1,\gamma_2$.
		\end{proof}

		\begin{theorem}[Energy Norm Error Estimate] \label{thm:energy_error}
		Assume that the penalty parameters $\gamma_1,\gamma_2$ are chosen large enough so that the coercivity \eqref{eq:coercivity} holds.
		Let $u \in H^{k+1}(\Omega)$ and $u_h \in V_h^k$ for $k \in \{3,4\}$.
		There exists a positive constant $C$ such that:
		\begin{equation}
			\norm{u - u_h}_{h,*} \leq C h^{k-1} |u|_{k+1, \Omega}. \label{eq:energy_est}
		\end{equation}
	\end{theorem}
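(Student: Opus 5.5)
The proof follows the standard Strang/Céa-type route, split through the interpolant of Lemma~\ref{lem:interpolation}. Set $\eta = u - \mathcal{I}_h^k u$ and $\xi = \mathcal{I}_h^k u - u_h \in V_h^k$, so that $u - u_h = \eta + \xi$. By the triangle inequality,
\[
\norm{u-u_h}_{h,*} \le \norm{\eta}_{h,*} + \norm{\xi}_{h,*}.
\]
The first term is already bounded by $C h^{k-1}|u|_{k+1,\Omega}$ by Lemma~\ref{lem:interpolation}, so the work lies entirely in the discrete error $\xi$.

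\textbf{Step 1: control $\norm{\xi}_h$.} Apply the coercivity of Theorem~\ref{thm:stability}, then Galerkin orthogonality (Lemma~\ref{lem:orthogonality}), in the form $A_h(u-u_h,\xi)=0$, i.e.\ $A_h(\xi,\xi) = -A_h(\eta,\xi)$. This gives
\[
\alpha\norm{\xi}_h^2 \le A_h(\xi,\xi) = -A_h(\eta,\xi).
\]
Since $\mathcal{I}_h^k u\in V_{h,\infty}^k$ and $u$ has no jumps, $\eta$ satisfies the hypotheses of Lemma~\ref{lem:mixed_cont}. Hence
\[
|A_h(\eta,\xi)|\le M\norm{\eta}_{h,*}\norm{\xi}_h,
\]
with $M$ independent of $\gamma_1,\gamma_2$, and therefore $\norm{\xi}_h\le (M/\alpha)\norm{\eta}_{h,*}$. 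By \eqref{eq:alpha_expression}, $\alpha$ is bounded below uniformly once $\gamma_i$ exceed their thresholds by a fixed margin, so the constant is robust for large penalties.

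\textbf{Step 2: pass from $\norm{\xi}_h$ to $\norm{\xi}_{h,*}$.} This is possible because $\xi$ is a discrete function. The extra trace terms $h_T\norm{\partial_{nn}\xi}_{\partial T}^2$ and $h_T^3\norm{T_e(\xi)}_{\partial T}^2$ are handled by the trace inequality \eqref{eq:trace_ineq} followed by the inverse estimate \eqref{eq:inverse_ineq}. These give
\[
h_T\norm{\partial_{nn}\xi}_{\partial T}^2 \le C\norm{\nabla^2\xi}_{0,T}^2, \qquad h_T^3\norm{T_e(\xi)}_{\partial T}^2\le C h_T^2\norm{\nabla^3\xi}_{0,T}^2\le C\norm{\nabla^2\xi}_{0,T}^2.
\]
The key point is that the inverse step is applied to $\nabla^2\xi$ (a polynomial), not to $\xi$, so no lower-order terms appear. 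Summing over $T$ yields $\norm{\xi}_{h,*}\le C\norm{\xi}_h$.

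\textbf{Step 3: combine.} Putting the two steps together,
\[
\norm{u-u_h}_{h,*}\le (1+CM/\alpha)\norm{\eta}_{h,*}\le Ch^{k-1}|u|_{k+1,\Omega}.
\]

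The main obstacle is Step 1, specifically the legitimacy of Galerkin orthogonality and the mixed continuity bound at the regularity $u\in H^{k+1}$. For $k=3$ we have only $H^4$, which is exactly the regularity Lemma~\ref{lem:orthogonality} requires, so the traces of $T_e(u)$ and $\partial_{nn}u$ make sense. In Lemma~\ref{lem:mixed_cont}, the cross terms $b_h(\eta,\xi)$ and $c_h(\eta,\xi)$ involve the full jumps of $\xi$ paired with traces of $\eta$, and also $\avg{T_e(\xi)}$ paired with $\jmp{\eta}$.

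The second group of pairings, $\avg{T_e(\xi)}\jmp{\eta}$ and $\avg{\partial_{nn}\xi}\jmp{\partial_n\eta}$, needs care: $\jmp{\eta}$ is not zero, only its $\mathcal{P}^{k-3}$ projection vanishes. The plan is to use Proposition~\ref{prop:orthogonality}: $T_e(\xi)\in\mathcal{P}^{k-3}(e)$ and $\partial_{nn}\xi\in\mathcal{P}^{k-2}(e)$. Therefore these pairings equal the pairings with $\mathcal{P}_e^{k-3}\jmp{\eta}$ and $\mathcal{P}_e^{k-2}\jmp{\partial_n\eta}$, which are zero. This is exactly why the terms drop out and the continuity constant remains penalty-independent.

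The remaining pairings, $\avg{T_e(\eta)}\jmp{\xi}$ and $\avg{\partial_{nn}\eta}\jmp{\partial_n\xi}$, are bounded by Cauchy--Schwarz with weights $h_e^{3}/h_e^{-3}$ and $h_e/h_e^{-1}$. The jump factors $h_e^{-3}\norm{\jmp{\xi}}_{0,e}^2$ and $h_e^{-1}\norm{\jmp{\partial_n\xi}}_{0,e}^2$ are controlled by $\norm{\xi}_h^2$ through Lemma~\ref{lem:norm_equiv}.
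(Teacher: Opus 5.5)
Your proposal is correct and follows essentially the same route as the paper. You split $u-u_h$ through the interpolant $\mathcal{I}_h^k u\in V_{h,\infty}^k$, use coercivity and Galerkin orthogonality with the penalty-independent mixed continuity bound to get $\alpha\|\xi\|_h^2\le |A_h(\eta,\xi)|\le M\|\eta\|_{h,*}\|\xi\|_h$, pass from $\|\xi\|_h$ to $\|\xi\|_{h,*}$ by trace and inverse inequalities, and conclude with the triangle inequality and Lemma~\ref{lem:interpolation}; your opposite sign convention for $\xi$ is immaterial. Your explicit degree-matching argument is a welcome detail that the paper's proof of Lemma~\ref{lem:mixed_cont} leaves implicit: the pairings $\avg{T_e(\xi)}\jmp{\eta}$ and $\avg{\partial_{nn}\xi}\jmp{\partial_n\eta}$ vanish because $T_e(\xi)|_e\in\mathcal{P}^{k-3}(e)$ and $\partial_{nn}\xi|_e\in\mathcal{P}^{k-2}(e)$ only see $\mathcal{P}_e^{k-3}\jmp{\eta}=0$ and $\mathcal{P}_e^{k-2}\jmp{\partial_n\eta}=0$.
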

	\begin{proof}
		Let $u_I = \mathcal{I}_h^k u \in V_{h,\infty}^k$ be the quasi-interpolant
		from Definition~\ref{def:interp}.  Decompose the exact error as
		$\varepsilon = u - u_h = (u - u_I) - (u_h - u_I) =: \eta - \xi$,
		where $\eta$ is the interpolation error and $\xi \in V_h^k$ is the
		discrete error.
		By the coercivity (Theorem~\ref{thm:stability}) and the Galerkin
		orthogonality (Lemma~\ref{lem:orthogonality}), we have
		\begin{equation}
		\alpha \|\xi\|_h^2 \le A_h(\xi,\xi)
		= A_h(u_h-u_I,\xi)
		= A_h(u-u_I,\xi) - A_h(u-u_h,\xi)
		= A_h(\eta,\xi).
		\end{equation}
		The crucial observation is that the penalty part $d_h(\eta,\xi)$ vanishes identically.
		Indeed, $\eta = u - u_I$ has zero projected jumps because
		$u$ is smooth and $u_I \in V_{h,\infty}^k$.
		Consequently, the mixed continuity bound of Lemma~\ref{lem:mixed_cont}
		involves a constant $M$ that is \emph{completely independent} of the
		penalty parameters $\gamma_1,\gamma_2$.
		Thus,
		\[
		|A_h(\eta,\xi)| \le M \|\eta\|_{h,*} \|\xi\|_h,
		\]
		and dividing by $\|\xi\|_h$ yields
		$\|\xi\|_h \le \frac{M}{\alpha} \|\eta\|_{h,*}$.
		
		For the discrete function $\xi$, the boundary terms in the extended norm can be bounded by the interior terms via standard inverse inequalities, yielding $\norm{\xi}_{h,*} \leq C_{inv}^* \norm{\xi}_h$ for a generic constant $C_{inv}^* > 0$.
		Applying the triangle inequality gives:
		\begin{equation}
			\begin{aligned}
				\norm{u - u_h}_{h,*} &\leq \norm{\eta}_{h,*} + \norm{\xi}_{h,*} \\
				&\leq \norm{\eta}_{h,*} + C_{inv}^* \norm{\xi}_h \\
				&\leq \left(1 + C_{inv}^* \frac{M}{\alpha}\right) \norm{\eta}_{h,*}.
			\end{aligned}
		\end{equation}
		Finally, applying the interpolation bound \eqref{eq:interp_bound} from Lemma~\ref{lem:interpolation}, we conclude:
		\begin{equation}
				\norm{u - u_h}_{h,*} \leq \left(1 + C_{inv}^* \frac{M}{\alpha}\right) C h^{k-1} |u|_{k+1, \Omega}.
		\end{equation}
	\end{proof}

	\begin{theorem}[$L^2$ Norm Error Estimate] \label{thm:l2_error}
		Assume that the domain $\Omega$ admits the full $H^4$ elliptic regularity for the dual biharmonic problem (this holds on smooth domains but generally fails on polygons; see Remark~\ref{rem:dual_regularity}). Under this assumption the dual solution $\phi$ satisfies $\norm{\phi}_{4, \Omega} \leq C_{reg} \norm{\psi}_{0, \Omega}$.
		Assuming the penalty parameters $\gamma_1, \gamma_2$ are sufficiently large, the following optimal $L^2$ error estimate holds:
		\begin{equation}
			\norm{u - u_h}_{0, \Omega} \leq C h^{k+1} |u|_{k+1, \Omega}. \label{eq:l2_est}
		\end{equation}
	\end{theorem}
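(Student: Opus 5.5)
We use an Aubin--Nitsche duality argument. The key extra point, beyond the standard one, is to approximate the dual solution in $V_{h,\infty}^k$ with an interpolant of only degree-$3$ accuracy, since $\phi\in H^4$ only.

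\textbf{Setup.} Set $\psi = u-u_h$ and let $\phi\in H_0^2(\Omega)$ solve the dual problem $\Delta^2\phi=\psi$ in $\Omega$, $\phi=\partial_n\phi=0$ on $\partial\Omega$. By the assumed regularity, $\|\phi\|_{4,\Omega}\le C_{reg}\|\psi\|_{0,\Omega}$. Since $\phi\in H^4$, its jumps and those of $\partial_n\phi$ vanish. Element-wise integration by parts, exactly as in Lemma~\ref{lem:orthogonality}, then gives the adjoint consistency identity $A_h(w,\phi)=(w,\Delta^2\phi)$ for every $w\in V(h)$. The $T_e(\phi)$ and $\partial_{nn}\phi$ traces reproduce the boundary terms generated by $a_h$. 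The penalty term $d_h(\cdot,\phi)$ vanishes because $\phi$ has no jumps. The symmetric halves of $b_h,c_h$ involve $\jmp{\phi}$ and $\jmp{\partial_n\phi}$, so they vanish as well. Hence
\[
\|u-u_h\|_{0,\Omega}^2=A_h(u-u_h,\phi)=A_h(u-u_h,\phi-\phi_I),
\]
where the second equality uses Galerkin orthogonality with any $\phi_I\in V_h^k$.

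\textbf{Choice of $\phi_I$ and estimate.} Take $\phi_I=\mathcal{I}_h^k\phi\in V_{h,\infty}^k$, or $\mathcal{I}_h^3\phi$ viewed in $V_h^k$ when $k=4$; the latter requires a check that $B_h^3\subset V_{h,\infty}^4$ fails, so I would instead use the fact that $\mathcal{I}_h^k$ reproduces cubics and is $H^4$-stable. Applying the argument of Lemma~\ref{lem:interpolation} with regularity index $4$ gives
\[
\|\phi-\phi_I\|_{h,*}\le Ch^{2}|\phi|_{4,\Omega}.
\]
The projected jumps of both $\phi-\phi_I$ and $u-u_I$ vanish, so the penalty pieces $d_h$ will drop out. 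I then need a symmetric continuity bound
\[
|A_h(w,z)|\le M\|w\|_{h,*}\|z\|_{h,*}
\]
valid when $z$ has zero projected jumps, with $w=u-u_h$. This bound must not reduce to Lemma~\ref{lem:mixed_cont}, because $u-u_h$ is not discrete. I would split $u-u_h=\eta-\xi$ and treat each piece:
\begin{itemize}
\item For $A_h(\eta,\phi-\phi_I)$, $d_h=0$ and the cross terms are handled by Cauchy--Schwarz with the trace components of $\|\cdot\|_{h,*}$.
\item For $A_h(\xi,\phi-\phi_I)$, use $d_h(\xi,\phi-\phi_I)=0$, Lemma~\ref{lem:mixed_cont} with the roles swapped (by symmetry of $A_h$), and Lemma~\ref{lem:norm_equiv}.
\end{itemize}
This yields
\[
\|u-u_h\|_0^2\le M\|u-u_h\|_{h,*}\,\|\phi-\phi_I\|_{h,*}\le C h^{k-1}|u|_{k+1}\cdot h^{2}\|\psi\|_0,
\]
using Theorem~\ref{thm:energy_error}. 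Dividing by $\|\psi\|_0$ gives $\mathcal O(h^{k+1})$.

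\textbf{Main obstacle.} The main obstacle is the cross terms in which the \emph{unprojected} jumps of $\phi-\phi_I$ pair with averages of traces of $u-u_h$. Their jumps of degree above $k-3$ (respectively $k-2$) do not vanish, and the averages $\avg{T_e(u-u_h)}$ and $\avg{\partial_{nn}(u-u_h)}$ are not polynomials. My first attempt would be to write $\avg{T_e(u-u_h)}=\avg{T_e(u)-\mathcal{P}_e^{k-3}T_e(u)}+\ldots$, inserting edge projections so that Proposition~\ref{prop:orthogonality} kills the low-order part. The remainder is then bounded by $h^{k-2}$ times the jump sizes, which themselves carry $h^{2}$ from $\phi$. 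For $k=3$ the order counting is tight, so I expect to need the superclose structure of $B_h^k$ (the moments of the jumps vanish) to gain the extra power. This is where the estimate could fail if only the broken-norm bounds are used.
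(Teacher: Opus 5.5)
\paragraph{A gap in the cross-term step.}
Your skeleton matches the paper's. Adjoint consistency and Galerkin orthogonality give $\norm{u-u_h}_{0,\Omega}^2=A_h(u-u_h,\eta_\phi)$ with $\eta_\phi=\phi-\mathcal{I}_h^k\phi$. Each term is then an $\mathcal{O}(h^{k-1})$ factor times an $\mathcal{O}(h^2)$ factor. (The paper simply takes $\phi_I=\mathcal{I}_h^k\phi$ for both $k$ with the saturated bound $\norm{\eta_\phi}_{h,*}\le Ch^2|\phi|_{4,\Omega}$, so your detour through $\mathcal{I}_h^3$ is not needed.)

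The gap is the step you yourself call the main obstacle, and you leave it open. The terms $\sum_e\int_e\avg{T_e(u-u_h)}\jmp{\eta_\phi}\,ds$, $\sum_e\int_e\avg{T_e(\eta_\phi)}\jmp{u-u_h}\,ds$ and their $\partial_{nn}$/$\partial_n$ counterparts involve \emph{full} jumps of non-discrete functions. These are not trace components of $\norm{\cdot}_{h,*}$, which contains only the projected jumps (zero for $\eta_\phi$) and the traces of $\partial_{nn}$ and $T_e$. Lemma~\ref{lem:norm_equiv} does not help, since it only applies on $V_h^k$. So the symmetric bound $|A_h(w,z)|\le M\norm{w}_{h,*}\norm{z}_{h,*}$ in your first bullet is asserted, not proved.

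Your proposed remedy, inserting edge projections and hoping for superclose properties of $B_h^k$, rests on a miscount. ``$h^{k-2}$ times the jump size'' drops the edge weights. In scaled form, $h_e^{3/2}\norm{\avg{T_e(u-u_h)}}_{0,e}$ is $\mathcal{O}(h^{k-1})$ and $h_e^{-3/2}\norm{\jmp{\eta_\phi}}_{0,e}$ is $\mathcal{O}(h^2)$. Their product is already $h^{k+1}$, for $k=3$ as well.

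\paragraph{The missing ingredient.}
What you need is an elementary consequence of vertex continuity, and this is how the paper closes the step. Since $\eta_\phi$ is single-valued at every vertex and vanishes at boundary vertices, its continuous piecewise linear Lagrange interpolant $P$ is well defined, $P\in H_0^1(\Omega)$, and $\jmp{\eta_\phi}=\jmp{\eta_\phi-P}$. The trace inequality \eqref{eq:trace_ineq} and the $P_1$ interpolation error on the patch $\omega_e$ of elements containing $e$ give $\norm{\jmp{\eta_\phi}}_{0,e}\le Ch_e^{3/2}|\eta_\phi|_{2,\omega_e}$. Hence $\sum_e h_e^{-3}\norm{\jmp{\eta_\phi}}_{0,e}^2\le C|\eta_\phi|_{2,\mathcal{T}_h}^2\le Ch^4|\phi|_{4,\Omega}^2$. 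In the same way, $\sum_e h_e^{-3}\norm{\jmp{u-u_h}}_{0,e}^2\le C|u-u_h|_{2,\mathcal{T}_h}^2$.

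Cauchy--Schwarz with weights $h_e^{3}$ and $h_e^{-3}$ then bounds the two $T_e$ cross terms by
\[
C\norm{u-u_h}_{h,*}\,h^2\norm{u-u_h}_{0,\Omega}
\quad\text{and}\quad
C\norm{\eta_\phi}_{h,*}\,|u-u_h|_{2,\mathcal{T}_h}.
\]
Both are $\mathcal{O}(h^{k+1})|u|_{k+1,\Omega}\norm{u-u_h}_{0,\Omega}$, with no cancellation from moment conditions.

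The paper handles the normal-derivative terms only ``analogously''. To make them precise:
\begin{itemize}
\item For $\eta_\phi$, the jump $\jmp{\partial_n\eta_\phi}$ has zero mean on each edge, because $\mathcal{P}_e^{k-2}$ contains constants. Subtract elementwise means of $\partial_n\eta_\phi$ and use Poincar\'e plus the trace inequality to get $h_e^{-1}\norm{\jmp{\partial_n\eta_\phi}}_{0,e}^2\le C|\eta_\phi|_{2,\omega_e}^2$.
\item For $u-u_h$, run the same argument and add the edge mean of the jump, which is controlled by $\norm{u-u_h}_h$.
\end{itemize}
With these jump bounds your $\eta$/$\xi$ split also goes through. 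The paper instead works directly with $u-u_h$ and Theorem~\ref{thm:energy_error}.
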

		
	\begin{proof}
		We employ the Aubin-Nitsche duality argument. Consider the dual problem $\Delta^2 \phi = u - u_h$.
		Let $\varepsilon = u - u_h$ denote the exact error. Testing the discrete formulation with the dual solution $\phi$, and letting $\phi_I = \mathcal{I}_h^k \phi$ be its interpolant (see Definition~\ref{def:interp}), the Galerkin orthogonality gives:
		
		\begin{equation}
			\norm{\varepsilon}_{0, \Omega}^2 = A_h(\varepsilon, \phi) = A_h(\varepsilon, \eta_\phi), \label{eq:aubin_nitsche}
		\end{equation}
		where $\eta_\phi = \phi - \phi_I$.
	
		By the elliptic regularity assumption, we have $|\phi|_{4, \Omega} \leq C_{reg} \norm{\varepsilon}_{0, \Omega}$.
		First, for the volume term, applying the Cauchy-Schwarz inequality and the global interpolation estimate yields:
		\begin{equation*}
			|a_h(\varepsilon, \eta_\phi)| \leq \norm{\varepsilon}_h \norm{\eta_\phi}_h \leq (C h^{k-1} |u|_{k+1,\Omega}) (C h^2 |\phi|_{4,\Omega}) \leq C h^{k+1} |u|_{k+1,\Omega} \norm{\varepsilon}_{0,\Omega}.
		\end{equation*}
		For the cross terms involving $T_e$, we apply the Cauchy-Schwarz inequality over the entire mesh skeleton $\mathcal{E}_h$:
		\begin{equation}
			\left| \sum_{e \in \mathcal{E}_h} \int_e \avg{T_e(\varepsilon)} \jmp{\eta_\phi} ds \right| \leq \left( \sum_{e \in \mathcal{E}_h} h_e^3 \norm{\avg{T_e(\varepsilon)}}_{0,e}^2 \right)^{1/2} \left( \sum_{e \in \mathcal{E}_h} h_e^{-3} \norm{\jmp{\eta_\phi}}_{0,e}^2 \right)^{1/2}.
		\end{equation}
		By the definition of the extended norm and the energy error estimate in Theorem~\ref{thm:energy_error}, the first global term is bounded by
		\begin{equation}
			\norm{\varepsilon}_{h,*} \leq C h^{k-1} |u|_{k+1,\Omega}.
		\end{equation}
	
		For the jump term of $\eta_\phi$, we proceed locally on each edge $e \in \mathcal{E}_h$. Let $E_e = K_1 \cup K_2$ denote the associated edge patch (the union of the one or two elements containing $e$).
		Since $\eta_\phi$ is continuous at the vertices, let $P$ be the linear Lagrange finite element interpolant of  $\eta_\phi$, then $P \in H_0^1(\Omega)$. Since $P$ is continuous across $e$, we have $\jmp{\eta_\phi}=\jmp{\eta_\phi-P}.$
		By the triangle inequality, $\|\jmp{\eta_\phi}\|_{0,e} \le \|(\eta_\phi - P)|_{K_1}\|_{0,e} + \|(\eta_\phi - P)|_{K_2}\|_{0,e}$. Applying the element-wise trace inequality \eqref{eq:trace_ineq} on $K_i$ ($i=1,2$) gives
		\[
		\|(\eta_\phi - P)|_{K_i}\|_{0,e} \le C \left( h_{K_i}^{-1/2} \|\eta_\phi - P\|_{0,K_i} + h_{K_i}^{1/2} |\eta_\phi - P|_{1,K_i} \right).
		\]
		Using the standard interpolation error estimate for the linear Lagrange interpolant on $K_i$, $\|\eta_\phi - P\|_{0,K_i} + h_{K_i} |\eta_\phi - P|_{1,K_i} \le C h_{K_i}^2 |\eta_\phi|_{2,K_i}$, we obtain
		\[
		\|(\eta_\phi - P)|_{K_i}\|_{0,e} \le C h_{K_i}^{3/2} |\eta_\phi|_{2,K_i}.
		\]
		Summing this over all edges and noting that each element acts as a patch for at most three edges, we obtain
		\[
		\sum_{e\in\mathcal E_h} h_e^{-3} \|\jmp{\eta_\phi}\|_{0,e}^2 \le C \sum_{K \in \mathcal{T}_h} |\eta_\phi|_{2,K}^2 = C |\eta_\phi|_{2,\mathcal{T}_h}^2.
		\]
		Invoking the global interpolation estimate of Lemma 3.3, $|\eta_\phi|_{2,\mathcal{T}_h} \le C h^2 |\phi|_{4,\Omega}$, and the elliptic regularity estimate,  we obtain
		\begin{equation}\label{eq:jump_bound}
			\Bigl( \sum_{e\in\mathcal{E}_h} h_e^{-3} \norm{\jmp{\eta_\phi}}_{0,e}^2 \Bigr)^{1/2} \le C h^2 |\phi|_{4,\Omega} \le C h^2 \|\varepsilon\|_{0,\Omega}.
		\end{equation}
		Combining \eqref{eq:jump_bound} with the energy norm error estimate $\|\varepsilon\|_{h,*}\le C h^{k-1}|u|_{k+1,\Omega}$, the cross term can be optimally bounded by:
		\begin{equation}
			\left| \sum_{e \in \mathcal{E}_h} \int_e \avg{T_e(\varepsilon)} \jmp{\eta_\phi} ds \right| \le C h^{k+1} |u|_{k+1,\Omega} \|\varepsilon\|_{0,\Omega}.
		\end{equation}
		
		For the other cross term, applying the Cauchy-Schwarz inequality yields:
		\begin{equation}
			\left| \sum_{e \in \mathcal{E}_h} \int_e \avg{T_e(\eta_\phi)} \jmp{\varepsilon} ds \right| \leq \left( \sum_{e \in \mathcal{E}_h} h_e^3 \norm{\avg{T_e(\eta_\phi)}}_{0,e}^2 \right)^{1/2} \left( \sum_{e \in \mathcal{E}_h} h_e^{-3} \norm{\jmp{\varepsilon}}_{0,e}^2 \right)^{1/2}.
		\end{equation}
		For the dual interpolation error $\eta_\phi = \phi - \mathcal{I}_h^k \phi$, Lemma~\ref{lem:interpolation} gives
		$\|\eta_\phi\|_{h,*} \le C h^{k-1} |\phi|_{k+1,\Omega}$. Although $\phi \in H^4(\Omega)$ by elliptic regularity, we can obtain the saturated bound
		\[
		\|\eta_\phi\|_{h,*} \le C h^2 |\phi|_{4,\Omega}
		\le C h^2 \|\varepsilon\|_{0,\Omega}.
		\]
		The discrete error component is similar to the above:
		\[
		\left( \sum_{e \in \mathcal{E}_h} h_e^{-3} \norm{\jmp{\varepsilon}}_{0,e}^2 \right)^{1/2}\leq \left( C \sum_{K \in \mathcal{T}_h} |u-u_h|_{2,K}^2  \right)^{1/2} \leq C h^{k-1}|u|_{k+1,\Omega}.
		\]
		This leads to
		\begin{equation}
			\begin{aligned}
				\left| \sum_{e \in \mathcal{E}_h} \int_e \avg{T_e(\eta_\phi)} \jmp{\varepsilon} ds \right| &\leq C (h^2 \norm{\varepsilon}_{0,\Omega}) (h^{k-1} |u|_{k+1,\Omega}) \\ 
				& = C h^{k+1} |u|_{k+1,\Omega} \norm{\varepsilon}_{0,\Omega}.
			\end{aligned}
		\end{equation}
		
		The terms involving $\jmp{\partial_n\eta_\phi}$ and $\jmp{\partial_n\varepsilon}$ are estimated analogously. Furthermore, since $\eta_\phi$ inherently possesses zero projected jumps by the definition of the interpolation operator, the penalty contribution vanishes identically, i.e., $d_h(\varepsilon, \eta_\phi) \equiv 0$. Collecting all contributions yields
		\[
		\|\varepsilon\|_{0,\Omega}^2
		\le
		C h^{k+1}
		|u|_{k+1,\Omega}
		\|\varepsilon\|_{0,\Omega}.
		\]
		Dividing both sides by $\norm{\varepsilon}_{0, \Omega}$ completes the proof.
	\end{proof}

	The estimate \eqref{eq:l2_est} shows that the discrete solution $u_h$ achieves the optimal $L^2$ convergence order $\mathcal{O}(h^{k+1})$.  
	In the asymptotic regime of large penalty parameters, Section \ref{subsec:penalty_evolution} further reveals that the $L^2$ error is dominated by the best approximation error of  $V_{h,\infty}^k$ (i.e., the optimal nonconforming space $B_h^k$), while the penalty contribution becomes negligible. Hence the method achieves optimal accuracy uniformly with respect to the penalty magnitude.

	\begin{remark}[Energy minimization and quasi-optimality]
		\label{rmk:energy_min}
		Since $A_h(\cdot,\cdot)$ is symmetric and coercive on $V_h^k$, the discrete solution $u_h$ is the unique minimizer over $V_h^k$ of the discrete energy functional $J(v_h) = \frac{1}{2}A_h(v_h,v_h) - (f,v_h)$. Equivalently, $u_h$ satisfies the Galerkin orthogonality $A_h(u-u_h, v_h)=0$ for all $v_h\in V_h^k$ (as proven in Lemma~\ref{lem:orthogonality}).
		By standard finite element arguments, for any arbitrary $v_h \in V_h^k$, we have:
		\begin{equation*}
			\alpha \|u_h - v_h\|_h^2 \le A_h(u_h - v_h, u_h - v_h) = A_h(u - v_h, u_h - v_h) \le M \|u - v_h\|_{h,*} \|u_h - v_h\|_h.
		\end{equation*}
		Dividing by $\|u_h - v_h\|_h$ and applying the triangle inequality together with the discrete inverse trace estimate yields the quasi-optimal bound:
		\[
		\|u-u_h\|_{h,*} \le \Bigl(1 + C_{\rm inv}^*\frac{M}{\alpha}\Bigr)
		\inf_{v_h\in V_h^k} \|u-v_h\|_{h,*}.
		\]
		The assertion that this bound strictly translates to an optimal approximation error stems from the nested structure of our spaces. For any \emph{fixed} $\gamma_1, \gamma_2$ satisfying the basic coercivity thresholds, the total error $\|u - u_h\|_{h,*}$ uniformly converges at the optimal $\mathcal{O}(h^{k-1})$ rate as $h \to 0$.  Since the limit space $V_{h,\infty}^k$ is a subspace of $V_h^k$, and $V_{h,\infty}^k \equiv B_h^k$, the infimum over the broader space $V_h^k$ is at least as small as the interpolation error within $B_h^k$. Because $B_h^k$ possesses the optimal approximation property $\mathcal{O}(h^{k-1})$, the quasi-optimal estimate guarantees the optimal convergence rate.
	\end{remark}

	\begin{remark}[Polygonal domains and dual regularity]\label{rem:dual_regularity}
		The full $H^4$ elliptic regularity required for the dual problem in Theorem~\ref{thm:l2_error} holds when $\Omega$ is a smooth domain (e.g., of class $C^{3,1}$).  On polygonal domains, however, corner singularities generally limit the dual solution $\phi$ to a strictly weaker Sobolev space.  For the clamped biharmonic problem on a convex square, the leading singularity in polar coordinates behaves as $r^\lambda$ with $\operatorname{Re}(\lambda) \approx 2.739$ 
		(see~\cite{Grisvard1985} and~\cite{Blum1980}), which restricts the dual solution $\phi$ to $H^{3.739-\varepsilon}(\Omega)$.
		Consequently, a rigorous application of the Aubin--Nitsche duality argument yields an $L^2$ convergence rate bounded by $\mathcal O(h^{k-1 + 1.739}) = \mathcal O(h^{k+0.739})$, rather than the optimal $\mathcal O(h^{k+1})$ predicted under full $H^4$ regularity.
		Despite this theoretical reduction, the numerical experiments on the square domain (Section~\ref{subsec:square_convergence}) still exhibit clean $\mathcal O(h^{k+1})$ convergence.  This is a well-known superconvergence effect on structured grids and does not contradict the locking-free property of the proposed scheme.
	\end{remark}

	\subsection{Evolution of the Penalty Parameter and Locking-Free Mechanism}
	\label{subsec:penalty_evolution}
	
    The behavior of the discrete solution as the penalty parameters vary can be summarized as follows.
		
	If the penalty parameters are smaller than the thresholds required in Theorem~\ref{thm:stability}, the bilinear form loses coercivity and the discrete system may become unstable.  
	Numerical experiments (Figures~\ref{fig:p4_s_combined}--\ref{fig:p3_l_combined}) confirm the oscillations for $\lambda < 10^2$, which correspond to this non-coercive range.
			
	When the penalty parameters just exceed the thresholds, the coercivity constant $\alpha$ in~\eqref{eq:alpha_expression} is small ($\alpha \to 0^+$). The quasi-optimal constant $C_{\rm eff} := 1 + C_{\rm inv}^* \frac{M}{\alpha}$ consequently becomes large, and the \emph{magnitude} of the error upper bound in~\eqref{eq:energy_est} degenerates. Nevertheless, the asymptotic convergence \emph{rate} $\mathcal O(h^{k-1})$ with respect to mesh refinement is entirely unaffected, because the degenerate constant $C_{\rm eff}$ is strictly independent of the mesh size $h$.
	
	The stabilization parameters $\gamma_1$ and $\gamma_2$ are \emph{dimensionless} constants. Because the proper physical mesh scalings $h_e^{-3}$ and $h_e^{-1}$ are intrinsically incorporated into the penalty operators in \eqref{eq:discrete_operators}, one does \emph{not} need to scale the penalty parameters with the mesh size (e.g., forcing $\gamma \sim h^{-3}$) to achieve convergence. As established in Remark \ref{rem:penalty_error}, the $\mathcal{O}(\gamma^{-1/2})$ bound merely quantifies how closely the IPDG solution shadows the limit nonconforming solution $u_{h,\infty}$ on a \emph{fixed} mesh as $\gamma_1, \gamma_2 \to \infty$.

	As the penalty parameters grow, the coercivity constant approaches $c_{\rm eq}/2$, and the continuity constant $M$ is independent of the penalty.
	Thus, for large penalty parameters, the penalty contribution to the error becomes negligible, ensuring the method is dominated solely by the optimal approximation power of $B_h^k$ and remains completely locking-free.

	\section{Numerical Experiments}
	\label{sec:numerical}
	In this section, we present numerical experiments to validate the theoretical analysis. The tests cover optimal convergence on regular meshes, robustness on non-convex domains, and sensitivity to the penalty parameters.
	
	To achieve this, we conduct our numerical simulations on two representative computational domains: a standard convex square domain and a non-convex L-shaped domain. The initial coarse triangulations for both domains are illustrated in Figure \ref{fig:initial_mesh_combined}. In the subsequent convergence tests, these initial meshes undergo successive uniform refinements to facilitate the asymptotic error analysis.
	
	\begin{figure}[htbp]
		\centering
		\subfloat[The initial mesh of the uniform triangulation]{\includegraphics[width=0.38\textwidth]{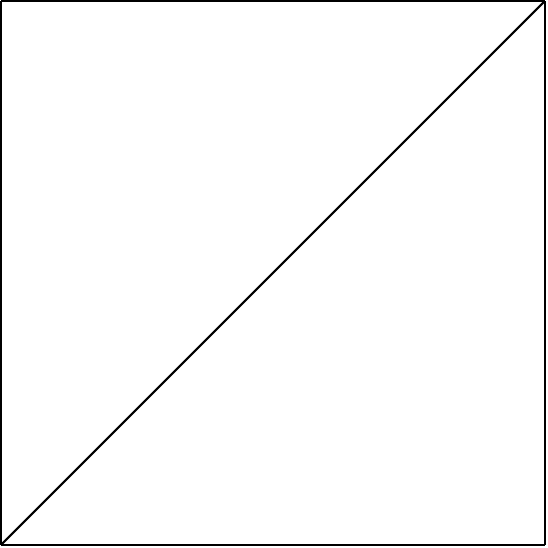}}
		\hspace{0.1\textwidth}
		\subfloat[The initial mesh of the triangulation for the L-shape domain]{\includegraphics[width=0.38\textwidth]{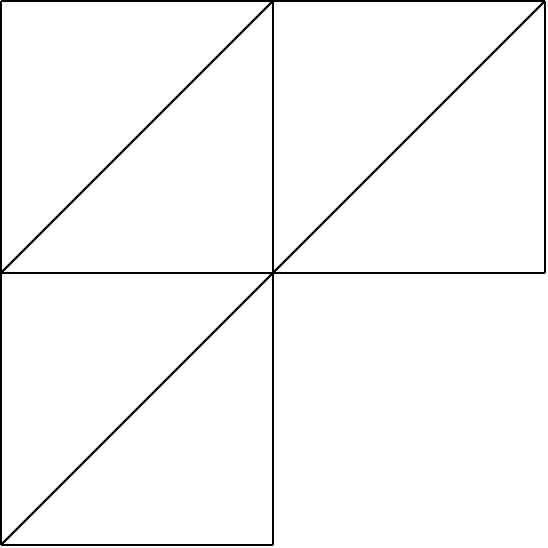}}
		\caption{The initial mesh.}
		\label{fig:initial_mesh_combined}
	\end{figure}

	\subsection{Experimental Setup and Error Measures}
	\label{subsec:setup}
	
	All numerical experiments are implemented and executed in MATLAB.
	The large sparse linear algebraic systems resulting from the discretization are solved using a direct solver.
	To assess the accuracy of our numerical approximations, we measure the errors in the discrete $L^2$ norm, the broken $H^1$ semi-norm, and the broken $H^2$ semi-norm.
	Let $u$ be the exact solution and $u_h \in V_h^k$ be the numerical solution obtained by the IPDG method.
	The error norms are defined cell-wise as follows:
	\begin{align}
		e_{L^2} &= \norm{u - u_h}_{0,\Omega}, \label{eq:error_l2} \\
		e_{H^1} &= \left( \sum_{T \in \mathcal{T}_h} \norm{\nabla(u - u_h)}_{0,T}^2 \right)^{1/2}, \label{eq:error_h1} \\
		e_{H^2} &= \left( \sum_{T \in \mathcal{T}_h} \norm{\nabla^2(u - u_h)}_{0,T}^2 \right)^{1/2}. \label{eq:error_h2}
	\end{align}
	The experimental convergence rate (Order) is calculated using the standard logarithmic ratio formula for two adjacent meshes:
	\begin{equation}
		\text{Order} = \frac{\log(e_{h_1} / e_{h_2})}{\log(h_1 / h_2)}, \label{eq:error_order}
	\end{equation}
	where $e_{h_1}$ and $e_{h_2}$ denote the numerical errors corresponding to mesh sizes $h_1$ and $h_2$, respectively.
	
	In the theoretical analysis, $\gamma_1$ and $\gamma_2$ control the function and normal derivative jumps, respectively.
	In the subsequent numerical implementation, we unify these stabilization parameters for simplicity by setting $\gamma_1 = \gamma_2 = \lambda$.
	Unless otherwise specified, we fix $\lambda = 1000$ to strictly satisfy the theoretical coercivity threshold derived in Theorem~\ref{thm:stability} (i.e., $\lambda > \max\{C_1^2, C_2^2\}$).
	
	In the convergence tables, $h$ denotes the mesh size of the coarsest triangulation used for each domain; subsequent meshes are
	obtained by uniform refinement (each triangle is subdivided into four congruent subtriangles).  For the unit square,
	$h_{\text{initial}}=1$; for the L-shaped domain, $h_{\text{initial}}=1$ corresponds to the triangulation shown in Figure~\ref{fig:initial_mesh_combined}(b), whose maximal element diameter is approximately $0.5\sqrt{2}$.
	
	\subsection{Accuracy and Optimal Convergence on the Square Domain}
	\label{subsec:square_convergence}
	In this subsection, we numerically verify the \textit{a priori} error estimates. We consider the biharmonic equation on a unit square domain $\Omega = (0,1)^2$ equipped with uniform structured meshes. To verify the approximation capabilities, we test both polynomial and oscillatory exact solutions.
	
	\subsubsection{Test 1: Smooth Polynomial Solution}
	\label{subsubsec:poly_test}
	
	We first construct a smooth polynomial exact solution designed to satisfy the homogeneous clamped boundary conditions:
	\begin{equation}
		u(x,y) = x^2(1-x)^2 y^2(1-y)^2.
		\label{eq:exact_poly}
	\end{equation}
	The corresponding right-hand side forcing term $f(x,y)$ is derived by applying the biharmonic operator $\Delta^2$ to the exact solution.
	
	Table~\ref{tab:poly_square_p3_convergence} and Table~\ref{tab:poly_square_p4_convergence} summarize the numerical errors and convergence rates for the $P_3$ and $P_4$ elements, respectively. The ``--'' markers in the first row of all convergence tables indicate the absence of a coarser mesh.
	As the mesh is uniformly refined, we observe that the $P_3$ element achieves the optimal 2nd-order asymptotic convergence in the energy norm ($H^2$-norm) and 4th-order in the $L^2$ norm.
	Similarly, the $P_4$ element exhibits the expected optimal convergence rates of 3rd-order in the energy norm and 5th-order in the $L^2$ norm.
	
	These results agree well with the theoretical predictions, verifying that the reduced-order penalty strategy achieves optimal approximation without loss of accuracy.
	
	\begin{table}[htbp]
		\footnotesize\tabcolsep 6pt
		\begin{center}
			\caption{Numerical errors and convergence rates of the $P_3$ IPDG method for the polynomial solution on the square domain ($\lambda = 1000$).}
			\label{tab:poly_square_p3_convergence}
			\begin{tabular}{@{} c c c c c c c c @{}}
				\toprule
				\multirow{2}{*}{Level} & \multirow{2}{*}{$h$ (refinement level)} & \multicolumn{2}{c}{$L^2$-Norm} & \multicolumn{2}{c}{$H^1$-Norm} & \multicolumn{2}{c}{$H^2$-Norm} \\
				\cmidrule(lr){3-4} \cmidrule(lr){5-6} \cmidrule(l){7-8}
				& & Error & Order & Error & Order & Error & Order \\
				\midrule
				1 & 1.000000 & $9.3638 \times 10^{-4}$ &   --   & $5.9091 \times 10^{-3}$ &   --   & $5.2839 \times 10^{-2}$ &   --   \\
				2 & 0.500000 & $5.5246 \times 10^{-4}$ & 0.76 & $3.5245 \times 10^{-3}$ & 0.75 & $3.4262 \times 10^{-2}$ & 0.63 \\
				3 & 0.250000 & $3.3807 \times 10^{-5}$ & 4.03 & $3.6928 \times 10^{-4}$ & 3.25 & $9.4290 \times 10^{-3}$ & 1.86 \\
				4 & 0.125000 & $1.5379 \times 10^{-6}$ & 4.46 & $4.0248 \times 10^{-5}$ & 3.20 & $2.3310 \times 10^{-3}$ & 2.02 \\
				5 & 0.062500 & $6.7533 \times 10^{-8}$ & 4.51 & $4.7295 \times 10^{-6}$ & 3.09 & $5.6760 \times 10^{-4}$ & 2.04 \\
				6 & 0.031250 & $3.7675 \times 10^{-9}$ & 4.16 & $5.8289 \times 10^{-7}$ & 3.02 & $1.3938 \times 10^{-4}$ & 2.03 \\
				\bottomrule
			\end{tabular}
		\end{center}
	\end{table}
	
	\begin{table}[htbp]
		\footnotesize\tabcolsep 6pt
		\begin{center}
			\caption{Numerical errors and convergence rates of the $P_4$ IPDG method for the polynomial solution on the square domain ($\lambda = 1000$).}
			\label{tab:poly_square_p4_convergence}
			\begin{tabular}{@{} c c c c c c c c @{}}
				\toprule
				\multirow{2}{*}{Level} 
				& \multirow{2}{*}{$h$ (refinement level)} & \multicolumn{2}{c}{$L^2$-Norm} & \multicolumn{2}{c}{$H^1$-Norm} & \multicolumn{2}{c}{$H^2$-Norm} \\
				\cmidrule(lr){3-4} \cmidrule(lr){5-6} \cmidrule(l){7-8}
				& & Error & Order & Error & Order & Error & Order \\
				\midrule
				1 & 1.000000 & $1.4422 \times 10^{-3}$ &   --    & $6.8338 \times 10^{-3}$ &   --   & $5.2896 \times 10^{-2}$ &   --   \\
				2 & 0.500000 & $2.4573 \times 10^{-4}$ &  2.55 & $3.4986 \times 10^{-3}$ &  0.97 & $5.8214 \times 10^{-2}$ & 0.14 \\
				3 & 0.250000 & $7.1550 \times 10^{-6}$ &  5.10 & $1.8568 \times 10^{-4}$ &  4.24 & $6.1515 \times 10^{-3}$ &  3.24  \\
				4 & 0.125000 & $2.1998 \times 10^{-7}$ &  5.02 & $1.1718 \times 10^{-5}$ &  3.99 & $7.9275 \times 10^{-4}$ &  2.96  \\
				5 & 0.062500 & $5.0360 \times 10^{-9}$ &  5.45 & $5.4694 \times 10^{-7}$ &  4.42 & $7.5421 \times 10^{-5}$ &  3.39  \\
				6 & 0.031250 & $1.0231 \times 10^{-10}$ &  5.62 & $2.3556 \times 10^{-8}$ &  4.54 & $6.7422 \times 10^{-6}$ &  3.48  \\
				\bottomrule
			\end{tabular}
		\end{center}
	\end{table}
	
	\subsubsection{Test 2: Smooth Trigonometric Solution}
	\label{subsubsec:trig_test}
	
	To exclude potential superconvergence phenomena specific to simple polynomial solutions, we further test a highly oscillatory trigonometric exact solution:
	\begin{equation}
		u(x,y) = \sin(2\pi x)\sin(2\pi y).
		\label{eq:exact_trig}
	\end{equation}
	The corresponding right-hand side function is derived by substituting the exact solution into the governing equation, yielding $f(x,y) = 64\pi^4\sin(2\pi x)\sin(2\pi y)$.
	
	The computational results are reported in Table~\ref{tab:trig_square_p3_convergence} and Table~\ref{tab:trig_square_p4_convergence}. Consistent with the polynomial test case, the $P_3$ and $P_4$ reduced-penalty schemes consistently maintain their respective optimal convergence rates even when approximating functions with rich high-frequency components.
	Specifically, at the finest mesh levels, the $P_4$ element achieves optimal 4th-order convergence in the $H^1$ norm and asymptotically approaches 3rd-order in the $H^2$ norm.
	This confirms that the asymptotic convergence rates predicted by the theory are attained for generic smooth solutions.
	
	\begin{table}[htbp]
		\footnotesize\tabcolsep 6pt
		\begin{center}
			\caption{Numerical errors and convergence rates of the $P_3$ IPDG method for the trigonometric solution on the square domain ($\lambda = 1000$).}
			\label{tab:trig_square_p3_convergence}
			\begin{tabular}{@{} c c c c c c c c @{}}
				\toprule
				\multirow{2}{*}{Level} & \multirow{2}{*}{$h$ (refinement level)} & \multicolumn{2}{c}{$L^2$-Norm} & \multicolumn{2}{c}{$H^1$-Norm} & \multicolumn{2}{c}{$H^2$-Norm} \\
				\cmidrule(lr){3-4} \cmidrule(lr){5-6} \cmidrule(l){7-8}
				& & Error & Order & Error & Order & Error & Order \\
				\midrule
				1 & 1.000000 & $3.1955 \times 10^{-1}$ &   --   & $1.6136 \times 10^{0}$ &   --   & $1.4399 \times 10^{1}$ &   --   \\
				2 & 0.500000 & $1.2487 \times 10^{-1}$ & 1.36 & $8.0309 \times 10^{-1}$ & 1.01 & $7.9565 \times 10^{0}$ & 0.86 \\
				3 & 0.250000 & $5.6381 \times 10^{-3}$ & 4.47 & $8.1103 \times 10^{-2}$ & 3.31 & $2.1351 \times 10^{0}$ & 1.90 \\
				4 & 0.125000 & $3.1017 \times 10^{-4}$ & 4.18 & $9.2556 \times 10^{-3}$ & 3.13 & $5.3521 \times 10^{-1}$ & 2.00 \\
				5 & 0.062500 & $1.8499 \times 10^{-5}$ & 4.07 & $1.1711 \times 10^{-3}$ & 2.98 & $1.3480 \times 10^{-1}$ & 1.99 \\
				6 & 0.031250 & $1.2182 \times 10^{-6}$ & 3.92 & $1.5404 \times 10^{-4}$ & 2.93 & $3.4255 \times 10^{-2}$ & 1.98 \\
				\bottomrule
			\end{tabular}
		\end{center}
	\end{table}
	
	\begin{table}[htbp]
		\footnotesize\tabcolsep 6pt
		\begin{center}
			\caption{Numerical errors and convergence rates of the $P_4$ IPDG method for the trigonometric solution on the square domain ($\lambda = 1000$).}
			\label{tab:trig_square_p4_convergence}
			\begin{tabular}{@{} c c c c c c c c @{}}
				\toprule
				\multirow{2}{*}{Level} 
				& \multirow{2}{*}{$h$ (refinement level)} & \multicolumn{2}{c}{$L^2$-Norm} & \multicolumn{2}{c}{$H^1$-Norm} & \multicolumn{2}{c}{$H^2$-Norm} \\
				\cmidrule(lr){3-4} \cmidrule(lr){5-6} \cmidrule(l){7-8}
				& & Error & Order & Error & Order & Error & Order \\
				\midrule
				1 & 1.000000 & $3.4386 \times 10^{-1}$ &   --    & $1.6240 \times 10^{0}$ &   --   & $1.3475 \times 10^{1}$ &   --   \\
				2 & 0.500000 & $8.5245 \times 10^{-2}$ &  2.01 & $1.2386 \times 10^{0}$ &  0.39 & $2.1068 \times 10^{1}$ & 0.64 \\
				3 & 0.250000 & $1.2396 \times 10^{-3}$ &  6.10 & $3.2392 \times 10^{-2}$ &  5.26 & $1.0439 \times 10^{0}$ &  4.33  \\
				4 & 0.125000 & $2.1320 \times 10^{-5}$ &  5.86 & $1.2541 \times 10^{-3}$ &  4.69 & $8.8380 \times 10^{-2}$ &  3.56  \\
				5 & 0.062500 & $4.2288 \times 10^{-7}$ &  5.66 & $5.9729 \times 10^{-5}$ &  4.39 & $9.2184 \times 10^{-3}$ &  3.26  \\
				6 & 0.031250 & $1.1121 \times 10^{-8}$ &  5.25 & $3.3861 \times 10^{-6}$ &  4.14 & $1.0775 \times 10^{-3}$ &  3.10  \\
				\bottomrule
			\end{tabular}
		\end{center}
	\end{table}

	\subsection{Performance on Non-Convex L-Shaped Meshes}
	\label{subsec:lshape_robustness}
	
	Although the \textit{a priori} error estimates were derived under the assumption of full elliptic regularity on convex polygons, practical engineering applications often involve meshes generated over non-convex domains. This subsection aims to demonstrate the geometric robustness of our discontinuous Galerkin (DG) framework when dealing with non-convex boundaries. We solve the biharmonic equation on an L-shaped domain $\Omega = [-1,1]^2 \setminus ([0,1] \times [-1,0])$. 
	
	To verify the stability of the vertex-continuous topological constraints and the reduced-order penalization on non-convex meshes, we employ manufactured solutions prescribed via boundary data. In this subsection, we focus on a smooth polynomial solution and a highly challenging singular solution. Additionally, to further exclude potential superconvergence phenomena and confirm the method's behavior against high-frequency oscillations on non-convex domains, a supplementary test utilizing a smooth trigonometric solution is provided in \ref{sec:appendix_trig}. For the smooth polynomial and trigonometric test cases in this section, the penalty parameter is fixed at $\lambda = 1000$.
	
	\subsubsection{Test 1: Smooth Polynomial Solution}
	\label{subsubsec:poly_lshape}
	
	We first consider a polynomial exact solution designed for the L-shaped domain:
	\begin{equation}
		u(x,y) = (x^2-1)^2 x^2 (y^2-1)^2 y^2.
		\label{eq:exact_poly_lshape}
	\end{equation}
	
	The numerical errors and convergence rates for the $P_3$ and $P_4$ IPDG methods are summarized in Table~\ref{tab:poly_lshape_p3_convergence} and Table~\ref{tab:poly_lshape_p4_convergence}, respectively.
	Despite the presence of the non-convex corner, the $P_3$ element maintains robust convergence, achieving nearly 2nd-order accuracy in the energy norm ($H^2$-norm) and nearly 4th-order in the $L^2$ norm as the mesh is sufficiently refined.
	Similarly, the $P_4$ element reaches approximately 3rd-order convergence in the energy norm and 5th-order in the $L^2$ norm.
	Overall, the method successfully maintains robustness on the non-convex domain.
	
	\begin{table}[htbp]
		\footnotesize\tabcolsep 6pt
		\begin{center}
			\caption{Numerical errors and convergence rates of the $P_3$ IPDG method for the polynomial solution on the L-shaped domain ($\lambda = 1000$).}
			\label{tab:poly_lshape_p3_convergence}
			\begin{tabular}{@{} c c c c c c c c @{}}
				\toprule
				\multirow{2}{*}{Level} & \multirow{2}{*}{$h$ (refinement level)} & \multicolumn{2}{c}{$L^2$-Norm} & \multicolumn{2}{c}{$H^1$-Norm} & \multicolumn{2}{c}{$H^2$-Norm} \\
				\cmidrule(lr){3-4} \cmidrule(lr){5-6} \cmidrule(l){7-8}
				& & Error & Order & Error & Order & Error & Order \\
				\midrule
				1 & 1.000000 & $1.0502 \times 10^{-2}$ &   --   & $6.1573 \times 10^{-2}$ &   --   & $5.4032 \times 10^{-1}$ &   --   \\
				2 & 0.500000 & $4.4062 \times 10^{-3}$ & 1.25 & $3.4700 \times 10^{-2}$ & 0.83 & $3.7581 \times 10^{-1}$ & 0.52 \\
				3 & 0.250000 & $3.8380 \times 10^{-4}$ & 3.52 & $4.9340 \times 10^{-3}$ & 2.81 & $1.2435 \times 10^{-1}$ & 1.60 \\
				4 & 0.125000 & $1.9203 \times 10^{-5}$ & 4.32 & $5.7133 \times 10^{-4}$ & 3.11 & $3.2508 \times 10^{-2}$ & 1.94 \\
				5 & 0.062500 & $1.0887 \times 10^{-6}$ & 4.14 & $7.4266 \times 10^{-5}$ & 2.94 & $8.3235 \times 10^{-3}$ & 1.97 \\
				6 & 0.031250 & $7.7249 \times 10^{-8}$ & 3.82 & $9.8318 \times 10^{-6}$ & 2.92 & $2.1143 \times 10^{-3}$ & 1.98 \\
				\bottomrule
			\end{tabular}
		\end{center}
	\end{table}
	
	\begin{table}[htbp]
		\footnotesize\tabcolsep 6pt
		\begin{center}
			\caption{Numerical errors and convergence rates of the $P_4$ IPDG method for the polynomial solution on the L-shaped domain ($\lambda = 1000$).}
			\label{tab:poly_lshape_p4_convergence}
			\begin{tabular}{@{} c c c c c c c c @{}}
				\toprule
				\multirow{2}{*}{Level} & \multirow{2}{*}{$h$ (refinement level)} & \multicolumn{2}{c}{$L^2$-Norm} & \multicolumn{2}{c}{$H^1$-Norm} & \multicolumn{2}{c}{$H^2$-Norm} \\
				\cmidrule(lr){3-4} \cmidrule(lr){5-6} \cmidrule(l){7-8}
				& & Error & Order & Error & Order & Error & Order \\
				\midrule
				1 & 1.000000 & $1.0355 \times 10^{-1}$ &   --    & $8.0936 \times 10^{-1}$ &   --   & $6.7102 \times 10^{0}$ &   --    \\
				2 & 0.500000 & $2.0932 \times 10^{-3}$ &  5.63 & $2.0307 \times 10^{-2}$ &  5.32 & $2.9509 \times 10^{-1}$ &  4.51  \\
				3 & 0.250000 & $8.1562 \times 10^{-5}$ &  4.68 & $1.9912 \times 10^{-3}$ &  3.35 & $6.6686 \times 10^{-2}$ &  2.15  \\
				4 & 0.125000 & $2.9772 \times 10^{-6}$ &  4.78 & $1.5144 \times 10^{-4}$ &  3.72 & $1.0301 \times 10^{-2}$ &  2.69  \\
				5 & 0.062500 & $7.8602 \times 10^{-8}$ &  5.24 & $8.4385 \times 10^{-6}$ &  4.17 & $1.1634 \times 10^{-3}$ &  3.15  \\
				6 & 0.031250 & $1.7000 \times 10^{-9}$ &  5.53 & $3.8264 \times 10^{-7}$ &  4.46 & $1.0874 \times 10^{-4}$ &  3.42  \\
				\bottomrule
			\end{tabular}
		\end{center}
	\end{table}

	\subsubsection{Test 2: Singular Solution}
	\label{subsubsec:sing_lshape}
	
	To further demonstrate the universality of the proposed framework, we evaluate the method using a classic singular exact solution on the L-shaped domain, as constructed by Hu et al.~\cite{Hu2021}. This solution exhibits a strong geometric singularity at the reentrant corner (the origin).
	
	Let $\omega := 3\pi/2$, and $\alpha = 0.544483736782464$ is a non-characteristic root of $\sin^2(\alpha\omega)=\alpha^2\sin^2(\omega)$ with
	\[
	g_{\alpha,\omega}(\theta) = g_1\bigl(\cos((\alpha-1)\theta) - \cos((\alpha+1)\theta)\bigr)
	- g_2\left(\frac{1}{\alpha-1}\sin((\alpha-1)\theta) - \frac{1}{\alpha+1}\sin((\alpha+1)\theta)\right)
	\]
	and
	\[
	g_1 = \frac{1}{\alpha-1}\sin((\alpha-1)\omega) - \frac{1}{\alpha+1}\sin((\alpha+1)\omega),
	\qquad
	g_2 = \cos((\alpha-1)\omega) - \cos((\alpha+1)\omega).
	\]
	
	The load function $f = \Delta^2 u$ is derived by the exact solution
	\[
	u(x,y) = (1-x^2)^2(1-y^2)^2\bigl(\sqrt{x^2+y^2}\bigr)^{1+\alpha} g_{\alpha,\omega}(\theta).
	\]
	
	Due to the severe singularity at the reentrant corner, the global regularity of this exact solution is fundamentally limited, satisfying $u \in H^{2+\alpha-\epsilon}(\Omega)$ for any $\epsilon > 0$. According to standard finite element approximation theory, this low global regularity limits the convergence rates. For the primal IPDG formulation, regardless of the high polynomial degree employed ($P_3$ or $P_4$), the theoretical upper bounds for the convergence rates are restricted to $\mathcal{O}(h^\alpha) \approx \mathcal{O}(h^{0.544})$ in the $H^2$ energy norm and $\mathcal{O}(h^{1+\alpha}) \approx \mathcal{O}(h^{1.544})$ in the $H^1$ norm. Moreover, owing to the non-convexity of the domain, the dual regularity required for the Aubin-Nitsche duality argument is compromised, restricting the optimal $L^2$ norm convergence limit to roughly $\mathcal{O}(h^{2\alpha}) \approx \mathcal{O}(h^{1.088})$. To maintain discrete coercivity near the singularity, we increase the penalty parameter to $\lambda = 5000$ for this test.
	
	The corresponding numerical results for the $P_3$ and $P_4$ elements are reported in Table~\ref{tab:ipdg_lshape_p3_singular_convergence} and Table~\ref{tab:ipdg_lshape_p4_singular_convergence}, respectively. As the mesh is uniformly refined, the empirical convergence rates for both elements naturally deviate from their optimal polynomial orders and asymptotically approach the theoretical regularity limits. Specifically, at the finest mesh levels, the $H^2$ error convergence stabilizes at approximately $0.55$, which closely matches the $0.54$ asymptotic order of the stress tensor error reported by Hu et al.~\cite{Hu2021} using a mixed finite element method. Concurrently, the $L^2$ error convergence approaches $1.09 \sim 1.23$, consistent with the theoretical dual limit. The stable approximation of this severe singularity by the $P_4$ element, without inducing spurious oscillations, demonstrates the robustness of the proposed IPDG scheme under low regularity conditions.

	\begin{table}[htbp]
		\footnotesize\tabcolsep 6pt
		\begin{center}
			\caption{Numerical errors and convergence rates of the $P_3$ IPDG method for the singular solution on the L-shaped domain ($\lambda = 5000$).}
			\label{tab:ipdg_lshape_p3_singular_convergence}
			\begin{tabular}{@{} c c c c c c c c @{}}
				\toprule
				\multirow{2}{*}{Level} 
				& \multirow{2}{*}{$h$ (refinement level)} & \multicolumn{2}{c}{$L^2$-Norm} & \multicolumn{2}{c}{$H^1$-Norm} & \multicolumn{2}{c}{$H^2$-Norm} \\
				\cmidrule(lr){3-4} \cmidrule(lr){5-6} \cmidrule(l){7-8}
				& & Error & Order & Error & Order & Error & Order \\
				\midrule
				1 & 1.000000 & $4.7380 \times 10^{-1}$ &   --    & $1.8057 \times 10^{0}$ &   --   & $1.1247 \times 10^{1}$ &   --   \\
				2 & 0.500000 & $8.0178 \times 10^{-2}$ &  2.56 & $4.0967 \times 10^{-1}$ &  2.14 & $4.7878 \times 10^{0}$ &  1.23 \\
				3 & 0.250000 & $8.8674 \times 10^{-3}$ &  3.18 & $6.3204 \times 10^{-2}$ &  2.70 & $1.5985 \times 10^{0}$ &  1.58 \\
				4 & 0.125000 & $1.5336 \times 10^{-3}$ &  2.53 & $1.5276 \times 10^{-2}$ &  2.05 & $7.6559 \times 10^{-1}$ &  1.06 \\
				5 & 0.062500 & $4.9625 \times 10^{-4}$ &  1.63 & $5.1525 \times 10^{-3}$ &  1.57 & $4.9359 \times 10^{-1}$ &  0.63 \\
				6 & 0.031250 & $2.1152 \times 10^{-4}$ &  1.23 & $1.8619 \times 10^{-3}$ &  1.47 & $3.3528 \times 10^{-1}$ &  0.56 \\
				\bottomrule
			\end{tabular}
		\end{center}
	\end{table}
	
	\begin{table}[htbp]
		\footnotesize\tabcolsep 6pt
		\begin{center}
			\caption{Numerical errors and convergence rates of the $P_4$ IPDG method for the singular solution on the L-shaped domain ($\lambda = 5000$).}
			\label{tab:ipdg_lshape_p4_singular_convergence}
			\begin{tabular}{@{} c c c c c c c c @{}}
				\toprule
				\multirow{2}{*}{Level} 
				& \multirow{2}{*}{$h$ (refinement level)} & \multicolumn{2}{c}{$L^2$-Norm} & \multicolumn{2}{c}{$H^1$-Norm} & \multicolumn{2}{c}{$H^2$-Norm} \\
				\cmidrule(lr){3-4} \cmidrule(lr){5-6} \cmidrule(l){7-8}
				& & Error & Order & Error & Order & Error & Order \\
				\midrule
				1 & 1.000000 & $2.2592 \times 10^{-1}$ &   --    & $9.1032 \times 10^{-1}$ &   --   & $7.6781 \times 10^{0}$ &   --   \\
				2 & 0.500000 & $1.2705 \times 10^{-2}$ &  4.15 & $8.7199 \times 10^{-2}$ &  3.38 & $1.6454 \times 10^{0}$ &  2.22 \\
				3 & 0.250000 & $2.7436 \times 10^{-3}$ &  2.21 & $2.2779 \times 10^{-2}$ &  1.94 & $7.4969 \times 10^{-1}$ &  1.13 \\
				4 & 0.125000 & $1.1813 \times 10^{-3}$ &  1.22 & $8.4778 \times 10^{-3}$ &  1.43 & $5.0127 \times 10^{-1}$ &  0.58 \\
				5 & 0.062500 & $5.4954 \times 10^{-4}$ &  1.10 & $3.3472 \times 10^{-3}$ &  1.34 & $3.4299 \times 10^{-1}$ &  0.55 \\
				6 & 0.031250 & $2.5869 \times 10^{-4}$ &  1.09 & $1.3877 \times 10^{-3}$ &  1.27 & $2.3502 \times 10^{-1}$ &  0.55 \\
				\bottomrule
			\end{tabular}
		\end{center}
	\end{table}

	\subsection{Sensitivity Analysis of the Penalty Parameters}
	\label{subsec:penalty_sensitivity}
	A known drawback of standard full-penalty high-order DG methods is that a large penalty parameter imposes overly strict continuity constraints on element interfaces, typically causing numerical locking. As highlighted during the method's construction, our adaptive reduced-order projection strategy (utilizing $\mathcal{P}^{k-3}$ for function value jumps and $\mathcal{P}^{k-2}$ for normal derivative jumps) theoretically eliminates this over-constraint. This subsection provides direct empirical evidence supporting this claim.
	
	To systematically investigate the parameter robustness of the proposed IPDG method, we fix the mesh size at $h = 0.0625$ and scan the penalty parameter $\lambda$ over a wide range:
	
	\begin{equation*}
		\lambda \in \{1, 10, 50, 100, 200, 500, 1000, 5000, 10000, 20000, 30000, 50000, 80000, 100000\}.
	\end{equation*}
	In the subsequent convergence order plots (the bottom rows of Figures~\ref{fig:p4_s_combined} and~\ref{fig:p4_l_combined}, as well as Figures~\ref{fig:p3_s_combined} and~\ref{fig:p3_l_combined} in \ref{sec:appendix_sensitivity}), the legends (e.g., $h_1/h_2$, $h_2/h_3$) specifically denote the empirical convergence rates calculated between two successive uniformly refined meshes across the entire spectrum of penalty parameters.
	Due to space constraints, the detailed sensitivity analysis for the $P_3$ element is provided in \ref{sec:appendix_sensitivity}.
	Here, we focus on the sensitivity analysis for the $P_4$ element.
	
	The theoretical optimal convergence rates for the $P_4$ element are $\mathcal{O}(h^5)$ for the $L^2$ norm, $\mathcal{O}(h^4)$ for the $H^1$ norm, and $\mathcal{O}(h^3)$ for the $H^2$ energy norm. The performance on the regular square domain is depicted in Figure~\ref{fig:p4_s_combined}, while the performance on the L-shaped domain is illustrated in Figure~\ref{fig:p4_l_combined}.
	
	In the regime of small penalty parameters (e.g., $\lambda < 10^2$), minor fluctuations in both error magnitudes and convergence rates are observed. This is not a numerical artifact, but a direct consequence of violating the theoretical coercivity threshold. As established in Theorem~\ref{thm:stability}, the discrete bilinear form $A_h(v,v)$ maintains strict positive-definiteness only when the stabilization parameters exceed a critical lower bound ($\gamma > C^2$). When $\lambda$ is insufficient, the interfacial constraints are too weak to stabilize the system, leading to non-physical spurious modes.

	Conversely, we observe a highly robust, parameter-independent stable regime in the medium-to-large penalty range ($\lambda \ge 10^3$). In standard full-penalty IPDG formulations, increasing the penalty parameter to such extremes (e.g., $10^5$) typically causes severe numerical locking. Our method avoids this difficulty entirely.
	As $\lambda$ increases, the penalty terms enforce the projected jumps to be vanishingly small, effectively constraining the discrete solution to an $\mathcal{O}(\lambda^{-1/2})$-neighborhood of the constrained subspace $V_{h,\infty}^k$(see also the quantitative bound in Subsection~\ref{subsec:penalty_evolution}). 
	Since $V_{h,\infty}^k$ coincides with the optimal nonconforming space $B_h^k$, the approximation error remains dominated by the best approximation error in $B_h^k$, which is $\mathcal{O}(h^{k-1})$ uniformly in $\lambda$.
	The horizontal plateaus observed in the figures are the empirical manifestations of this theoretical guarantee: the method is completely locking-free.
	Furthermore, a comparative analysis between the convex square domain and the non-convex L-shaped domain highlights the geometric robustness of the spatial discretization. Despite the reentrant corner, the parameter-independent stable regime remains fully intact.
	
	\begin{figure}[p]
		\centering
		\subfloat[Polynomial exact solution]{\includegraphics[width=1.0\textwidth]{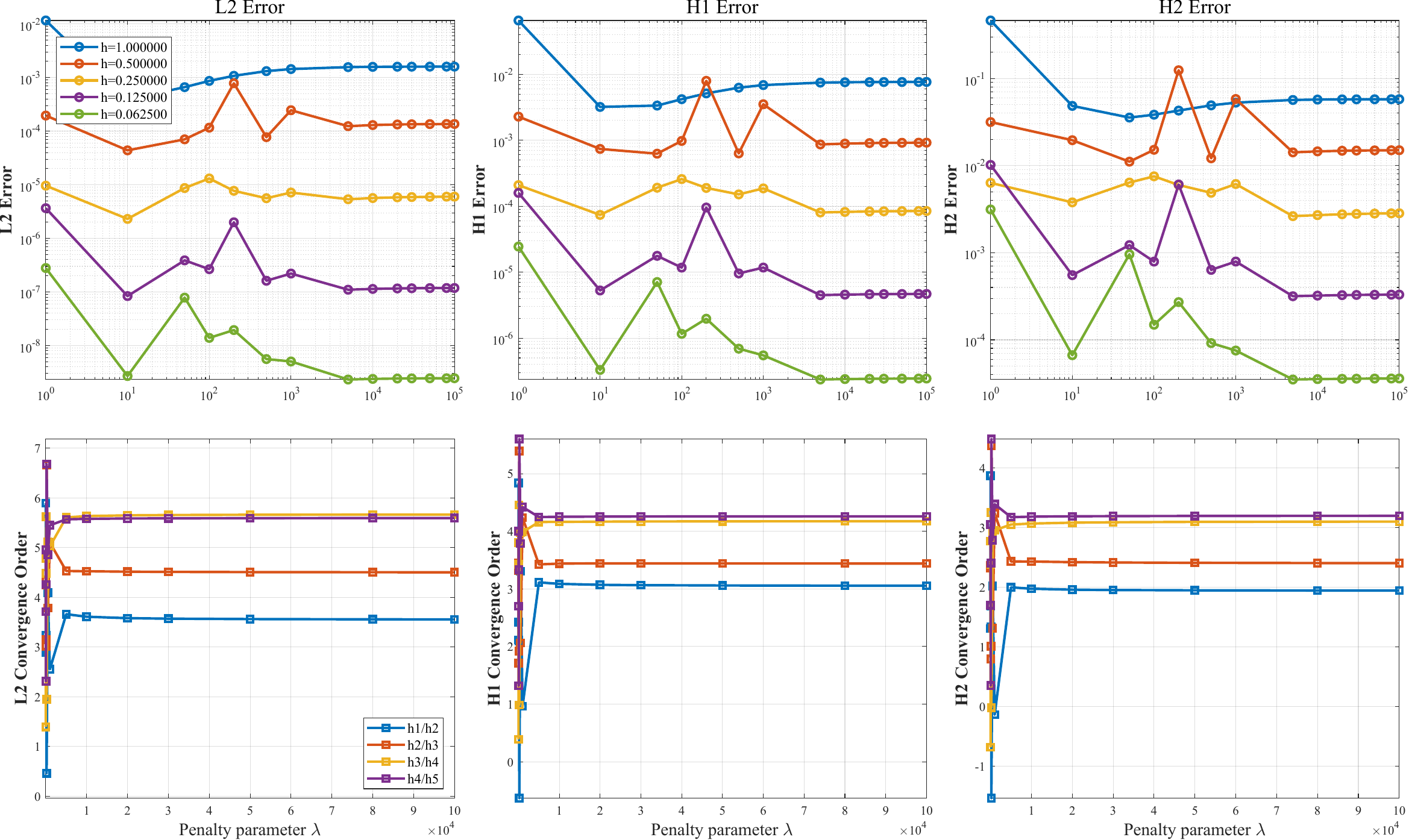}} \\
		\vspace{0.4cm}
		\subfloat[Trigonometric exact solution]{\includegraphics[width=1.0\textwidth]{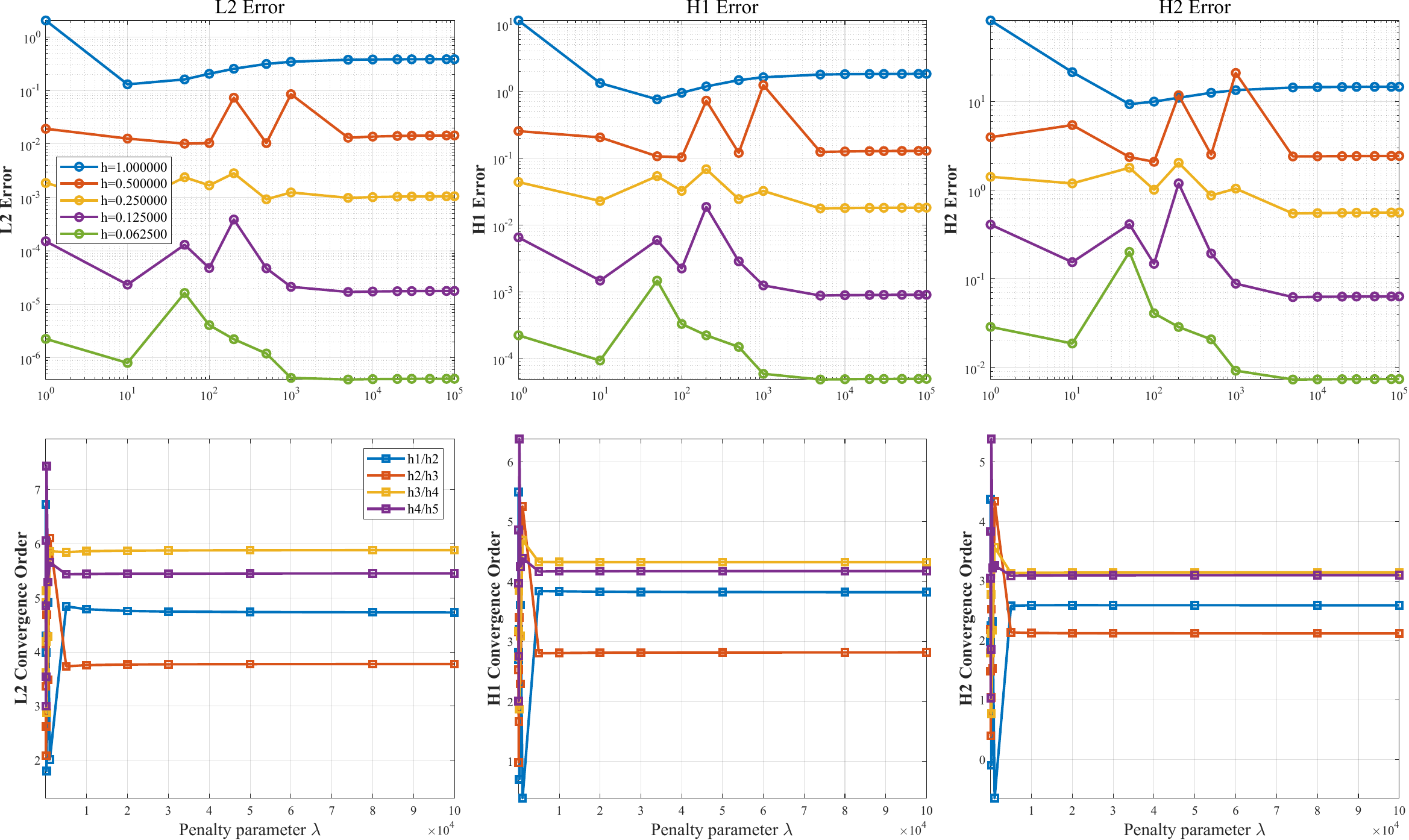}}
		\caption{Performance of the $P_4$ element on the square domain. Top row: Numerical errors versus penalty parameter $\lambda$. Bottom row: Convergence rates versus penalty parameter $\lambda$.}
		\label{fig:p4_s_combined}
	\end{figure}

	\begin{figure}[p]
		\centering
		\subfloat[Polynomial exact solution]{\includegraphics[width=1.0\textwidth]{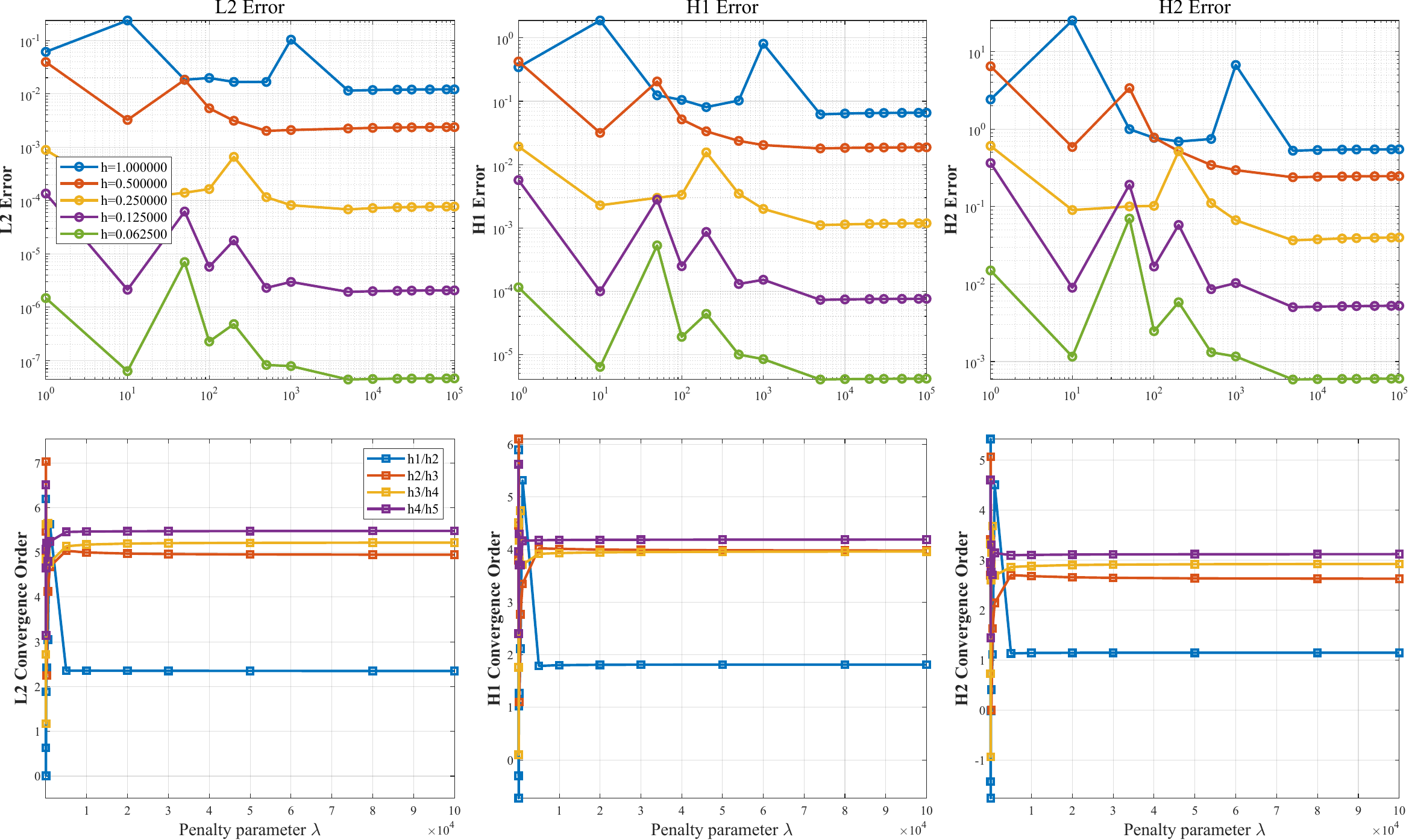}} \\
		\vspace{0.4cm}
		\subfloat[Trigonometric exact solution]{\includegraphics[width=1.0\textwidth]{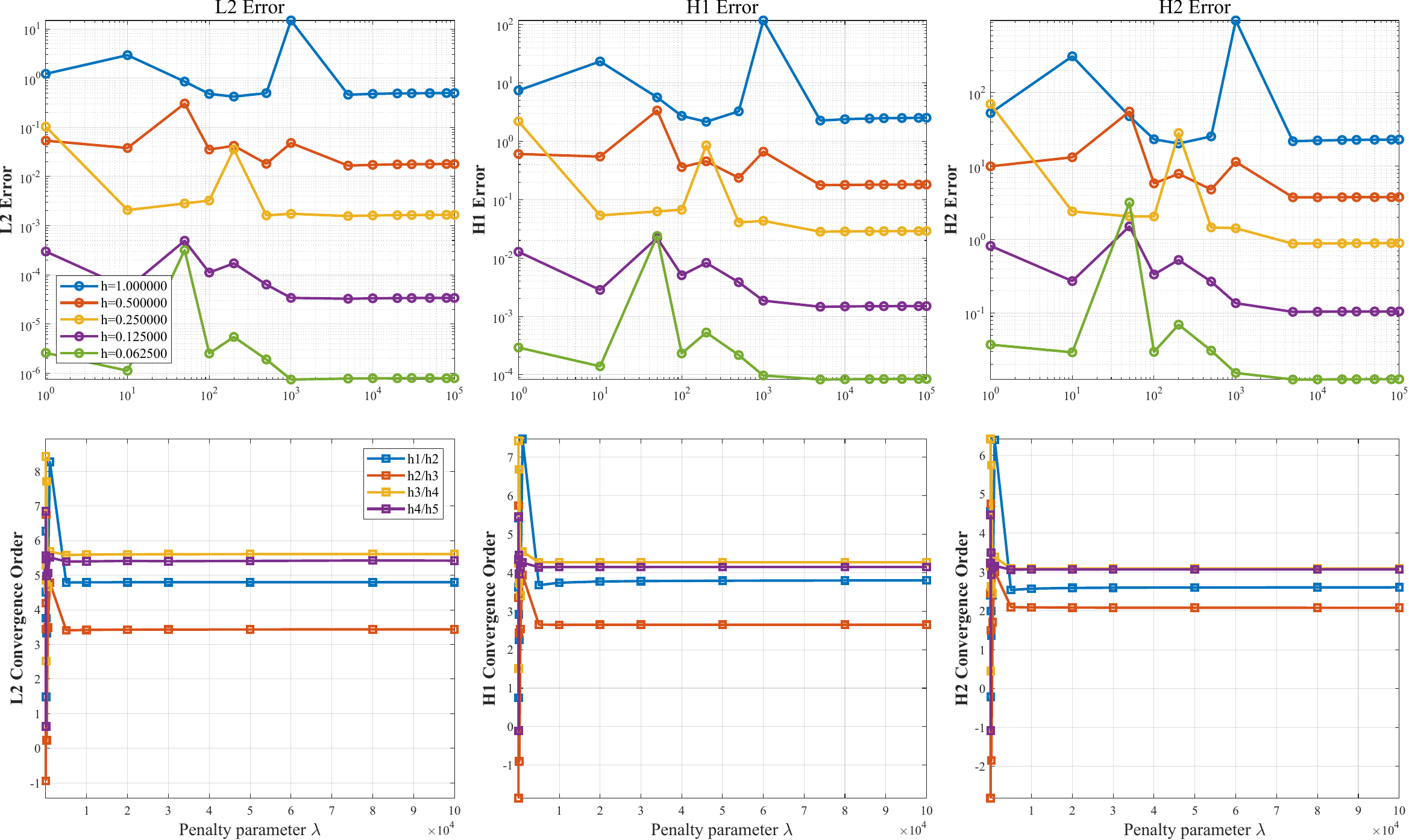}}
		\caption{Performance of the $P_4$ element on the L-shaped domain. Top row: Numerical errors versus penalty parameter $\lambda$. Bottom row: Convergence rates versus penalty parameter $\lambda$.}
		\label{fig:p4_l_combined}
	\end{figure}

	\section{Conclusion}
	\label{sec:conclusion}
	
	We have developed and rigorously analyzed a unified, high-order IPDG framework for fourth-order elliptic problems. By combining a partially continuous finite element space with degree-matched reduced-order projections, the method ensures that, as the penalty grows, the discrete solution converges to the solution of the nonconforming finite element method on $B_h^k$, the optimal nonconforming space. This intrinsic connection decouples the scheme's approximation capability from the penalty magnitude, thereby completely eliminating numerical locking for cubic and quartic elements.
	
	We emphasize the distinguishing feature of the present approach: in contrast to weakly over-penalized methods~\cite{Brenner2010WOPSIP}, which avoid locking by reducing the penalty on high-frequency modes, or mixed formulations that circumvent the issue by introducing auxiliary variables, our scheme achieves locking-free behavior within the pure IPDG framework \emph{uniformly for arbitrarily large} penalty parameters. 
	This robustness is a direct consequence of the fact that the constrained subspace $V_{h,\infty}^k$ coincides with the known optimal nonconforming space \(B_h^k\).
	
	Theoretical analysis proves optimal convergence rates of \(\mathcal{O}(h^{k-1})\) in the energy norm and \(\mathcal{O}(h^{k+1})\) in the \(L^2\) norm for \(k\in\{3,4\}\). Numerical experiments on both convex and non-convex geometries validate these findings, and extensive sensitivity analyses confirm that the errors and convergence rates remain uniformly stable for penalty parameters up to \(10^5\).
	
	Future work will aim at extending this framework to three-dimensional polyhedral meshes. We stress, however, that such an extension is highly nontrivial: in 3D the jumps and the corresponding projection operators must be defined on faces, and the construction of an exact discrete Stokes complex on general tetrahedral partitions is considerably more delicate for quartic elements. A full stability analysis in 3D remains a challenging open problem.

	\Acknowledgements{This paper is supported partially by the Strategic Priority Research Program of Chinese Academy of Sciences (Grant No. XDB0640000) and by the National Natural Science Foundation of China (Grant No. 12271512, No. 12371389). The authors thank Dr. Rui Ma for her constructive discussions, which were of great help to this article.}
	

	
	\appendix
	
	\addtocontents{toc}{\let\protect\numberline\protect\appendixnumberline}
	
	\section{Supplementary Test: Smooth Trigonometric Solution on the L-Shaped Domain}
	\label{sec:appendix_trig}
	We also test a trigonometric exact solution on the non-convex L-shaped domain:
	\begin{equation}
		u(x,y) = \sin(2\pi x)\sin(2\pi y).
		\label{eq:exact_trig_lshape}
	\end{equation}
	
	The computational results for the $P_3$ and $P_4$ elements under a fixed penalty parameter ($\lambda = 1000$) are reported in Table \ref{tab:trig_lshape_p3_convergence} and Table \ref{tab:trig_lshape_p4_convergence}. Similar to the polynomial case, the errors systematically decrease as the mesh size $h$ is halved. The energy norm errors for both elements consistently converge at the expected optimal rates (approximately $\mathcal{O}(h^2)$ for $P_3$ and $\mathcal{O}(h^3)$ for $P_4$). 
	\begin{table}[htbp]
		\footnotesize\tabcolsep 6pt
		\begin{center}
			\caption{Numerical errors and convergence rates of the $P_3$ IPDG method for the trigonometric solution on the L-shaped domain ($\lambda = 1000$).}
			\label{tab:trig_lshape_p3_convergence}
			\begin{tabular}{@{} c c c c c c c c @{}}
				\toprule
				\multirow{2}{*}{Level} & \multirow{2}{*}{$h$ (refinement level)} & \multicolumn{2}{c}{$L^2$-Norm} & \multicolumn{2}{c}{$H^1$-Norm} & \multicolumn{2}{c}{$H^2$-Norm} \\
				\cmidrule(lr){3-4} \cmidrule(lr){5-6} \cmidrule(l){7-8}
				& & Error & Order & Error & Order & Error & Order \\
				\midrule
				1 & 1.000000 & $4.8693 \times 10^{-1}$ &  --  & $2.4426 \times 10^{0}$ &  --  & $2.3540 \times 10^{1}$ &  --  \\
				2 & 0.500000 & $1.7846 \times 10^{-1}$ & 1.45 & $1.3544 \times 10^{0}$ & 0.85 & $1.3241 \times 10^{1}$ & 0.83 \\
				3 & 0.250000 & $8.4759 \times 10^{-3}$ & 4.40 & $1.3453 \times 10^{-1}$ & 3.33 & $3.5944 \times 10^{0}$ & 1.88 \\
				4 & 0.125000 & $4.7984 \times 10^{-4}$ & 4.14 & $1.5856 \times 10^{-2}$ & 3.08 & $9.1823 \times 10^{-1}$ & 1.97 \\
				5 & 0.062500 & $3.2448 \times 10^{-5}$ & 3.89 & $2.0578 \times 10^{-3}$ & 2.95 & $2.3398 \times 10^{-1}$ & 1.97 \\
				6 & 0.031250 & $2.4572 \times 10^{-6}$ & 3.72 & $2.7051 \times 10^{-4}$ & 2.93 & $5.9562 \times 10^{-2}$ & 1.97 \\
				\bottomrule
			\end{tabular}
		\end{center}
	\end{table}
	
	\begin{table}[htbp]
		\footnotesize\tabcolsep 6pt
		\begin{center}
			\caption{Numerical errors and convergence rates of the $P_4$ IPDG method for the trigonometric solution on the L-shaped domain ($\lambda = 1000$).}
			\label{tab:trig_lshape_p4_convergence}
			\begin{tabular}{@{} c c c c c c c c @{}}
				\toprule
				\multirow{2}{*}{Level} & \multirow{2}{*}{$h$ (refinement level)} & \multicolumn{2}{c}{$L^2$-Norm} & \multicolumn{2}{c}{$H^1$-Norm} & \multicolumn{2}{c}{$H^2$-Norm} \\
				\cmidrule(lr){3-4} \cmidrule(lr){5-6} \cmidrule(l){7-8}
				& & Error & Order & Error & Order & Error & Order \\
				\midrule
				1 & 1.000000 & $1.4870 \times 10^{1}$ &  --  & $1.1738 \times 10^{2}$ &  --  & $9.6426 \times 10^{2}$ &  --  \\
				2 & 0.500000 & $4.7918 \times 10^{-2}$ & 8.28 & $6.6274 \times 10^{-1}$ & 7.47 & $1.1444 \times 10^{1}$ & 6.40 \\
				3 & 0.250000 & $1.7419 \times 10^{-3}$ & 4.78 & $4.3355 \times 10^{-2}$ & 3.93 & $1.4263 \times 10^{0}$ & 3.00 \\
				4 & 0.125000 & $3.3903 \times 10^{-5}$ & 5.68 & $1.8607 \times 10^{-3}$ & 4.54 & $1.3613 \times 10^{-1}$ & 3.39 \\
				5 & 0.062500 & $7.4192 \times 10^{-7}$ & 5.51 & $9.7105 \times 10^{-5}$ & 4.26 & $1.5252 \times 10^{-2}$ & 3.16 \\
				6 & 0.031250 & $1.8952 \times 10^{-8}$ & 5.29 & $5.6846 \times 10^{-6}$ & 4.09 & $1.8240 \times 10^{-3}$ & 3.06 \\
				\bottomrule
			\end{tabular}
		\end{center}
	\end{table}
	
	\section{Sensitivity Analysis for the $P_3$ Element}
	\label{sec:appendix_sensitivity}
	
	The performance of the $P_3$ element on the regular square domain is depicted in Figure~\ref{fig:p3_s_combined}. The sensitivity analysis on the non-convex L-shaped domain shows the same stable behavior for large penalty parameters, as depicted in Figure~\ref{fig:p3_l_combined}.
			
	\begin{figure}[p]
		\centering
		\subfloat[Polynomial exact solution]{\includegraphics[width=1.0\textwidth]{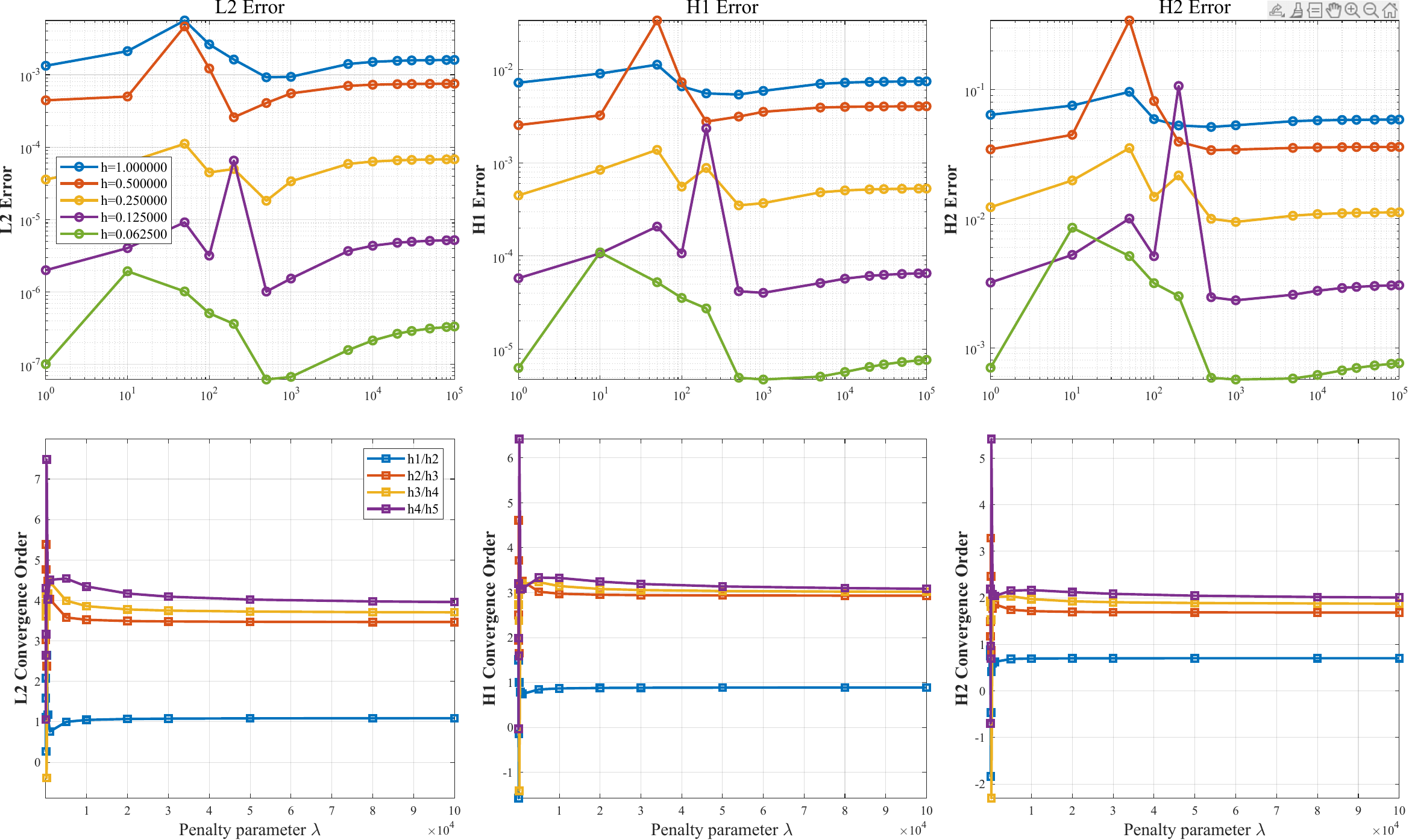}} \\
		\vspace{0.5cm}
		\subfloat[Trigonometric exact solution]{\includegraphics[width=1.0\textwidth]{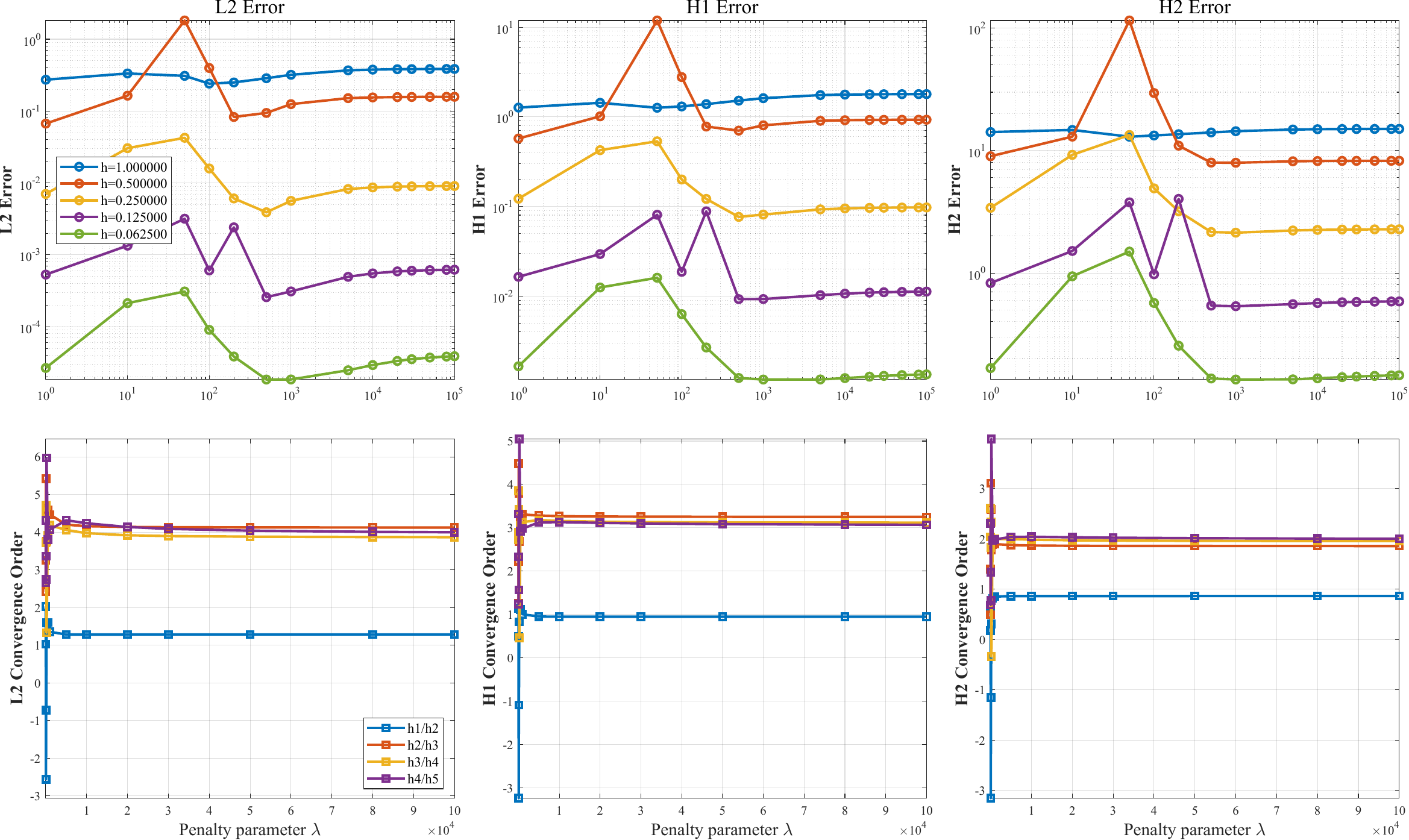}}
		\caption{Performance of the $P_3$ element on the square domain. Top row: Numerical errors versus penalty parameter $\lambda$. Bottom row: Convergence rates versus penalty parameter $\lambda$.}
		\label{fig:p3_s_combined}
	\end{figure}

	\begin{figure}[p]
		\centering
		\subfloat[Polynomial exact solution]{\includegraphics[width=1.0\textwidth]{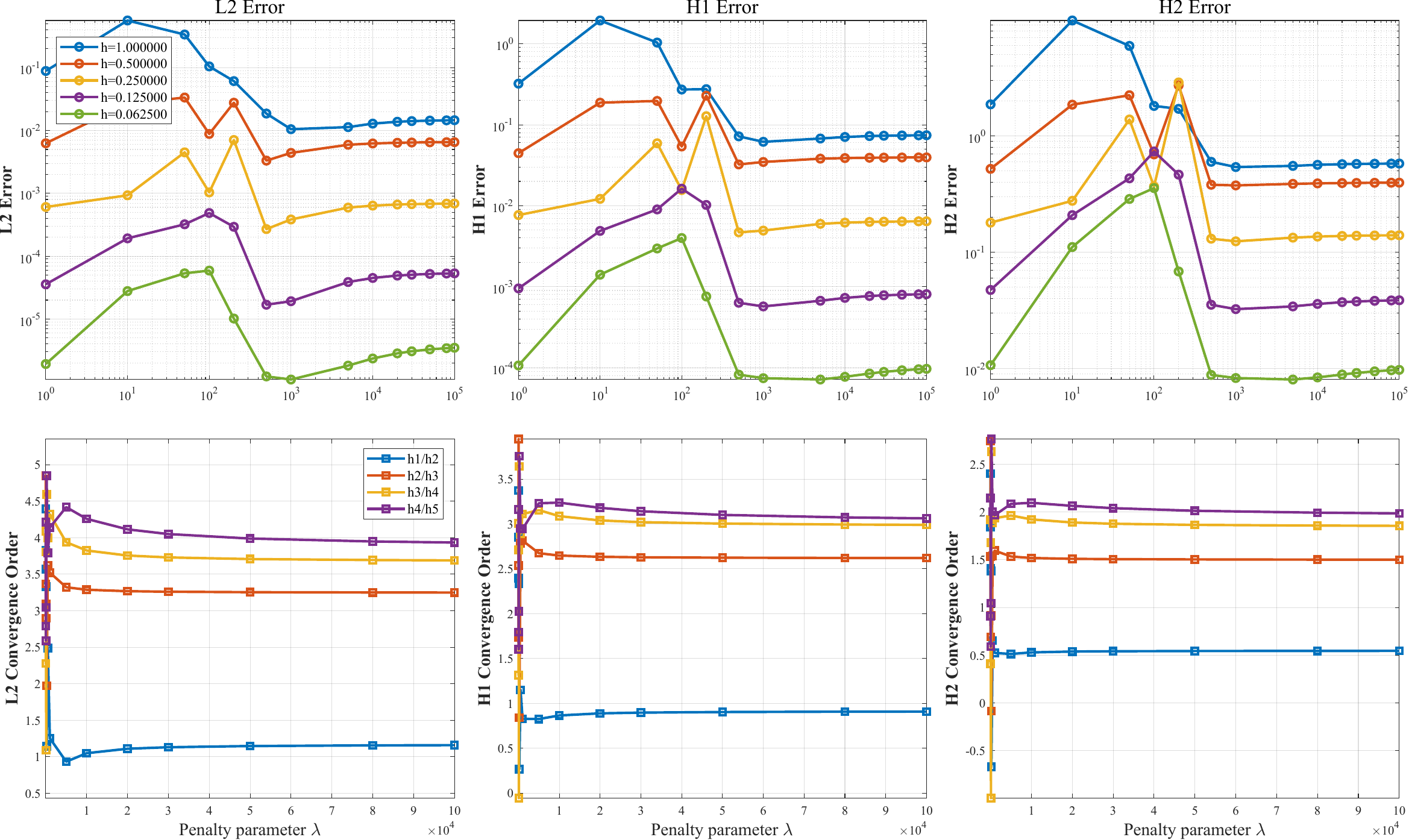}} \\
		\vspace{0.5cm}
		\subfloat[Trigonometric exact solution]{\includegraphics[width=1.0\textwidth]{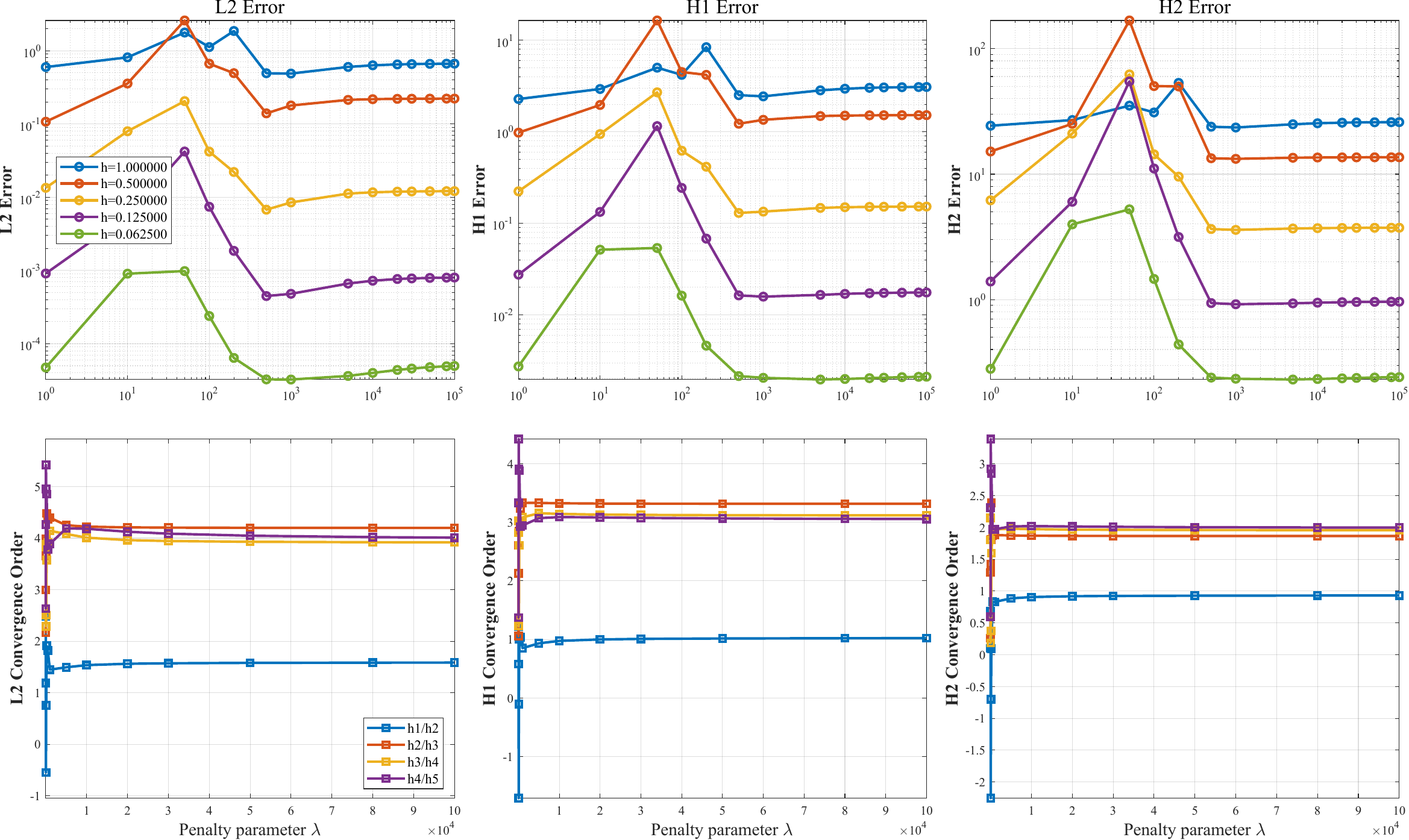}}
		\caption{Performance of the $P_3$ element on the L-shaped domain. Top row: Numerical errors versus penalty parameter $\lambda$. Bottom row: Convergence rates versus penalty parameter $\lambda$.}
		\label{fig:p3_l_combined}
	\end{figure}
\clearpage
\end{document}